\title[Inequalities of Chern classes ]{Inequalities of Chern classes on nonsingular projective $n$-folds of Fano and general type with ample canonical bundle}
\author[Rong Du]{Rong Du$^{\dag}$}
\address{Department of Mathematics\\
Shanghai Key Laboratory of PMMP\\
East China Normal University\\
Rm. 312, Math. Bldg, No. 500, Dongchuan Road\\
Shanghai, 200241, P. R. China} \email{rdu@math.ecnu.edu.cn}
\author[Hao Sun]{Hao Sun$^{\dag\dag}$}
\address{Department of Mathematics, Shanghai Normal University,
Shanghai 200234, P. R. of China}
\email{hsun@shnu.edu.cn}
\thanks{$^{\dag}$ The Research is Sponsored by the National Natural Science Foundation of China (Grant No. 11471116, 11531007) and Science and Technology Commission of Shanghai Municipality (Grant No. 18dz2271000).}
\thanks{$^{\dag\dag}$ The Research is Sponsored by the National Natural Science Foundation of China (Grant No. 11771294, 11301201)}
\theoremstyle{definition}
\newtheorem{theorem}[subsection]{Theorem}
\newtheorem{lemma}[subsection]{Lemma}
\newtheorem{proposition}[subsection]{Proposition}
\newtheorem{example}{Example}[section]
\newtheorem{corollary}[subsection]{Corollary}
\newtheorem{remark}[subsection]{Remark}
\newfont{\drnew}{wncyr10}
\def\dashfill{\leaders\hbox{\hbox to 3.25pt{\hrulefill}\hspace*{2pt}\hbox to 3.25pt{\hrulefill}}\hfill}
\newcommand{\CITE}[1]{{[#1]}}
\let\cite=\CITE
\begin{document}

\begin{abstract}
Let $X$ be a nonsingular projective $n$-fold $(n\ge 2)$ of Fano or of general type with ample canonical bundle $K_X$ over an algebraic closed field $\kappa$ of any characteristic. We produce a new method to give a bunch of inequalities in terms of all the Chern classes $c_1, c_2, \cdots, c_n$ by pulling back Schubert classes in the Chow group of Grassmannian under the Gauss map. Moreover, we show that if the characteristic of $\kappa$ is $0$, then the Chern ratios $(\frac{c_{2,1^{n-2}}}{c_{1^n}}, \frac{c_{2,2,1^{n-4}}}{c_{1^n}}, \cdots, \frac{c_{n}}{c_{1^n}})$ are contained in a convex polyhedron for all $X$. So we give an affirmative answer to a generalized open question, that whether the region described by the Chern ratios is bounded, posted by Hunt (\cite{Hun}) to all dimensions. As a corollary, we can get that there exist constants $d_1$, $d_2$, $d_3$ and $d_4$ depending only on $n$ such that $d_1K_X^n\le\chi_{top}(X)\le d_2 K_X^n$ and $d_3K_X^n\le\chi(X, \mathscr{O}_X)\le d_4 K_X^n$. If the characteristic of $\kappa$ is positive, $K_X$ (or $-K_X$) is ample and $\mathscr{O}_X(K_X)$ ($\mathscr{O}_X(-K_X)$, respectively) is globally generated, then the same results hold.
\end{abstract}

\maketitle

\vspace{1cm}
\section{\textbf{Introduction}}
One of the fundamental questions in algebraic geometry is the
classification of algebraic varieties. The classical method is by
considering the numerical invariants first. Among all kinds of
numerical invariants, Chern numbers are the most natural and
important ones. The terminology geography which was first introduced
by Persson in 1981 (see \cite{Per}) is used to describe the
distribution of Chern numbers of nonsingular projective varieties of
general type, i.e. whether there exists nonsingular projective
varieties $X$ of dimension $n$ for every given set of numbers such
that $X$ has precisely those Chern numbers. So the first thing is to
determine if the quotients of those numbers are in a bounded set or
not. For $n=2$ and $X$ is minimal over the complex numbers, we have
Noether's inequalities:

\[K_X^2\ge 2p_g-4,\] here $p_g=h^0(X, K_X)$.
From Noether's formula, we can have
\[5c_1^2+36\ge c_2.\]
On the other hand, we have the famous Bogomolov-Miyaoka-Yau
inequality
\[c_2\ge \frac{1}{3}c_1^2.\]
So $c_2/c_1^2$ is bounded.

For $n=2$ and over a field of positive characteristic, Noether's
inequality (see \cite{Lie}) and Noether's formula (see \cite{Bad}
Chap. 5) remain true, while Lang (\cite{Lan}) and Easton (\cite{Eas}) gave examples of surfaces, such as generalized Raynaud surfaces, for which the Bogomolov-Miyaoka-Yau inequality no longer holds (cf. \cite{Szp}, Sec. 3.4). In fact, from
Raynaud's example (\cite{Ray}) even the weaker inequality $c_2\ge 0$
due to Castelnuovo and de Franchis fails (cf. \cite{Gu}, Sec. 3). So
it is natural to formulate an inequality in positive characteristic
bounding $c_2$ from below by $c_1^2$. Shepherd-Barron has already
considered a similar question and proved that $\chi> 0$
(equivalently, $c_2>-c_1^2$) with a few possible exceptional cases
when the characteristic of the field is not greater than $7$
(\cite{S-B}, Theorem 8). Later, Gu solved Shepherd-Barron's question
and got an optimal lower bound of $\chi/c_1^2$.

For $n=3$ and over the complex numbers, Hunt initiated the study of
geography for threefolds (\cite{Hun}). Later, Chang, Kim and Nollet
gave a bound of $c_3$ by quadratic forms in term of $c_1c_2$ and
$c_1^3$ of threefolds with ample canonical bundle (\cite{C-K-N}).
Later, Chang and Lopez obtained a linear bound of $c_3$ of threefolds
with ample canonical bundle, i.e. boundedness for the region
described by the Chern ratios $c_3/c_1c_2,~c_1^3/c_1c_2$
(\cite{C-L}). Their idea is to control the Euler number by the
Rieman-Roch formula and cohomology calculations. Their result relies
on the Bogomolov-Miyaoka-Yau inequality heavily for dimension $3$.
M. Chen-Hacon and J. Chen- Hacon (see \cite{CM-H}, \cite{CJ-H}) also
considered the geography of Gorenstein minimal complex $3$-folds of
general type after 2000. For positive characteristic, as far as the
authors know that there are no such kind of considerations.

For $n\ge 4$, such problem seems unknown even over the field of
complex numbers. Except for the famous Bogomolov-Miyaoka-Yau
inequality which is in the case of characteristic $0$,  many other
mathematicians studied inequalities of Chern classes (see
\cite{F-L}, \cite{B-S-S}, \cite{Ca-Sc}, \cite{Ch-Le}, \cite{Ko}, \cite{Mi},
\cite{Sun}). In this paper, we show that there are similar boundedness
result as the dimension $3$ for characteristic $0$ if $K_X$ (or
$-K_X$) is ample and for positive characteristic if moreover
$\mathscr{O}_X(K_X)$ ($\mathscr{O}_X(-K_X)$, respectively) is globally generated.
So we give an affirmative answer to a generalized open question,fore
that whether the region described by the Chern ratios is bounded,
posted by Hunt (\cite{Hun}) to all dimensions. In particular, we generalize Chang-Lopez's result to all dimensions. (For the notations, please see the paragraph before Theorem \ref{mainT}.)

\vspace{.5cm}
\textbf{Main Theorem:}(see Theorem \ref{mainT})

Let $X$ be a nonsingular projective variety of dimension $n$ over an
algebraic closed field $\kappa$ with any characteristic. Suppose
$K_X$ (or $-K_X$) is ample. If the characteristic of $\kappa$ is $0$
or the characteristic of $\kappa$ is positive and $\mathscr{O}_X(K_X)$ ($\mathscr{O}_X(-K_X)$, respectively )
is globally generated, then
\begin{equation}\label{poly}
(\frac{c_{2,1^{n-2}}}{c_{1^n}}, \frac{c_{2,2,1^{n-4}}}{c_{1^n}}, \cdots, \frac{c_{n}}{c_{1^n}})\in \mathbb{A}^{p(n)}
\end{equation}
is contained in a convex polyhedron in $\mathbb{A}^{p(n)}$
independent of $X$, where $p(n)$ is the partition number and the
elements in the parentheses arranged from small to big in terms of
the alphabet order of the lower indices of the numerators.

\vspace{.5cm} In particular, we show that the Euler number
$\chi_{top}(X)$ and Euler characteristic of the structure sheaf $\chi(X, \mathscr{O}_X)$ can be controlled linearly by $K_X^n$. More
precisely, there exist constants $d_1$, $d_2$, $d_3$ and $d_4$, which do not depend on $X$ but only on $n$, such that
\begin{equation*}
d_1K_X^n\le\chi_{top}(X)\le d_2 K_X^n
\end{equation*}
and
\begin{equation*}
d_3K_X^n\le\chi(X, \mathscr{O}_X)\le d_4 K_X^n.
\end{equation*}
Our results can also infer the classical boundedness result for
dimension $2$ and Chang-Lopez's result for dimension $3$ easily
without using the Bogomolov-Miyaoka-Yau inequality. Furthermore, we
can deduce a bunch of inequalities of Chern classes not only Chern
numbers.

\begin{remark}
Fulton-Lazarsfeld,  Demailly-Peternell-Schneider, and Catanese-Schneider's results can give many inequalities of Chern
classes. Moreover, combining their results and
Bogomolov-Miyaoka-Yau inequalities (for characteristic $0$), one
gets immediately bounds for $\frac{c_{2,1^{n-2}}}{c_{1^n}}$,  which
can also start our induction in the proof of the main thoerem (see
Theorem \ref{mainT}). However, their methods depend on the characteristic $0$ and Bogomolov-Miyaoka-Yau inequality.  Our method is totally different and doesn't rely on Bogomolov-Miyaoka-Yau inequality. We can deal with any characteristic of the algebraic closed field simultaneously.
\end{remark}

In Section 2, we introduce Schubert cycles and Schubert classes of
Grassmannian. Pieri's formula and Giambelli's formula are also
mentioned in this section for Schubert calculus later. In Section
3, we recall Fujita conjecture and known relevant very ampleness
results in any characteristic first. Then we solve our main theorem
by estimating Chern classes from Schubert calculus and the help of
Zak's theorem. In Section 4, we produce a new algorithm to give a
bunch of inequalities in terms of all the Chern classes $c_1, c_2,
\cdots, c_n$ by pulling back the Schubert classes in the Chow group of
the Grassmannian under the Gauss map.

\section{\textbf{Schubert cycles and Schubert classes of Grassmannian}}
We will recall the basic definition Schubert cycles and classes of
the Chow group of $G(n, k)$, the Grassmannian of k-dimensional
subspaces in an n-dimensional vector space $V$, and analyze their
intersections, a subject that goes by the name of the \emph{Schubert
calculus}. Of course we may also consider $G(n, k)$ in its
projective guise as $\mathbb{G}(n-1, k-1)$, the Grassmannian of
projective $(k-1)$-planes in $\mathbb{P}^{n-1}$. We recommend
excellent books \cite{G-H} and \cite{E-H} for details.

\emph{Schubert cycles} are defined in terms of a chosen complete flag $\mathcal{V}$ in $V$ , i.e., a nested
sequence of subspaces
\[0\subset V_1 \subset \cdots \subset V_{n-1}\subset V_n=V\]
with dim$V_i=i$. The Schubert cycles are indexed by sequences
$\overrightarrow{a}=(a_1, a_2, \cdots a_k)$ of integers with
\[n-k\ge a_1\ge a_2\ge \cdots\ge a_k\ge 0\] We define
$|\overrightarrow{a}|:=\sum_{i=1}^k a_i$ and
$l(\overrightarrow{a}):=k$.

For such a sequence $\overrightarrow{a}$, we define the
\emph{Schubert cycle}
$\Sigma_{\overrightarrow{a}}(\mathcal{V})\subset G(n, k)$ to be the
closed subset
\[\Sigma_{\overrightarrow{a}}(\mathcal{V})=\{\Lambda\in G(n, k)~|~\text{dim}(V_{n-k+i-a_i}\cap\Lambda)\ge i~ \text{for all}~i\}.\]
We know that the class
$[\Sigma_{\overrightarrow{a}}(\mathcal{V})]\in A(G(n, k))$ does not
depend on the choice of the flag, since any two flags differ by an
element of GL$_n$, where $A(G(n, k))$ is the Chow group of $G(n,
k)$. So we shorten the notation to $\Sigma_{\overrightarrow{a}}$ and
define \emph{Schubert classes}
\[\sigma_{\overrightarrow{a}}:=[\Sigma_{\overrightarrow{a}}]\in A(G(n, k)).\]
The following theorem shows that $A(G(n, k))$ is a free abelian
group and that the classes $\sigma_{\overrightarrow{a}}$ form a
basis.

\begin{theorem}(\cite{E-H} Corollary 4.7)
The Schubert classes form a free basis for $A(G(n, k))$, and the intersection
form $$A^m(G(n, k))\times A^{\text{dim}G(n, k)-m}(G(n, k))\rightarrow\mathbb{Z}$$ have the Schubert classes as dual bases.
\end{theorem}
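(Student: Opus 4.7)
The plan is to establish both statements via the classical Schubert cell decomposition of $G(n,k)$. First I would refine the Schubert varieties by introducing the open Schubert cells $\Sigma^\circ_{\overrightarrow{a}}(\mathcal{V}) \subset \Sigma_{\overrightarrow{a}}(\mathcal{V})$ cut out by the strict incidence conditions $\dim(V_{n-k+i-a_i}\cap\Lambda)=i$ and $\dim(V_{n-k+i-a_i-1}\cap\Lambda)=i-1$ for all $i$. Putting a basis of any $\Lambda\in\Sigma^\circ_{\overrightarrow{a}}$ in reduced row-echelon form with respect to $\mathcal{V}$ produces an explicit isomorphism $\Sigma^\circ_{\overrightarrow{a}}\cong\mathbb{A}^{k(n-k)-|\overrightarrow{a}|}$, and a direct check shows that the resulting disjoint union $G(n,k)=\bigsqcup_{\overrightarrow{a}}\Sigma^\circ_{\overrightarrow{a}}(\mathcal{V})$ is an affine stratification whose strata have closures equal to the Schubert varieties $\Sigma_{\overrightarrow{a}}(\mathcal{V})$.

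Next I would invoke the standard principle that a variety with such an affine stratification has its Chow groups freely generated (over $\mathbb{Z}$) by the classes of the closures of the strata. The proof is a Noetherian induction on the stratification using the localization sequence $A(Z)\to A(Y)\to A(Y\setminus Z)\to 0$ together with $A(\mathbb{A}^N)=\mathbb{Z}[\mathbb{A}^N]$. This immediately gives the first half of the theorem: the $\sigma_{\overrightarrow{a}}$ form a free $\mathbb{Z}$-basis of $A(G(n,k))$, with $\sigma_{\overrightarrow{a}}$ lying in codimension $|\overrightarrow{a}|$.

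For the duality claim I would define the complementary partition $\overrightarrow{a}^*:=(n-k-a_k,\ldots,n-k-a_1)$ and fix a second flag $\mathcal{V}'$ opposite to $\mathcal{V}$. For partitions with $|\overrightarrow{a}|+|\overrightarrow{b}|=\dim G(n,k)$ I would compute $\sigma_{\overrightarrow{a}}\cdot\sigma_{\overrightarrow{b}}$ by intersecting $\Sigma_{\overrightarrow{a}}(\mathcal{V})$ with $\Sigma_{\overrightarrow{b}}(\mathcal{V}')$. A count of the total codimension contributed by the combined flag incidence conditions forces this intersection to be empty unless $\overrightarrow{b}=\overrightarrow{a}^*$, in which case the conditions are tight and uniquely determine $\Lambda$ as the span of the one-dimensional intersections $V_{n-k+i-a_i}\cap V'_{k-i+1+a_i}$. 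Thus the set-theoretic intersection is either empty or a single reduced point, so $\sigma_{\overrightarrow{a}}\cdot\sigma_{\overrightarrow{b}}=\delta_{\overrightarrow{b},\overrightarrow{a}^*}$ once transversality is established.

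The main obstacle is transversality at that isolated point, without which the intersection multiplicity could exceed $1$ and the duality pairing could fail to be integral. I would handle it by identifying $T_\Lambda G(n,k)\cong\operatorname{Hom}(\Lambda,V/\Lambda)$ and describing the tangent spaces of the two Schubert varieties there as kernels of complementary linear conditions coming from the two flags; a direct calculation shows these tangent spaces span the ambient tangent space, an argument that is valid in any characteristic. Alternatively, one can appeal to Kleiman's theorem on transversality of a general translate applied to the transitive $\mathrm{GL}(V)$-action on $G(n,k)$. Either route yields the dual-basis property and completes the proof.
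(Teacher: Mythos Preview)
The paper does not prove this theorem; it simply quotes it from \cite{E-H} (Corollary~4.7) as background. Your proposal is correct and is precisely the standard argument given in that reference: the affine Schubert cell stratification yields the free basis via the localization sequence, and intersecting with Schubert varieties for an opposite flag gives the duality. One small caution: your alternative appeal to Kleiman transversality for the transitive $\mathrm{GL}(V)$-action is only valid in characteristic~$0$, so in positive characteristic you must rely on the explicit tangent-space computation you outlined (which does work), since the paper needs this result over fields of arbitrary characteristic.
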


To simplify notation, we generally suppress trailing zeros in the
indices, writing $\sigma_{a_1,\cdots,~ a_s}$ in place of
$\sigma_{(a_1,\cdots, ~a_s, 0, \cdots,0)}$. Also, we use the
shorthand $\sigma_{p^r}$ to denote $\sigma_{p,\cdots, p}$ with $r$
indices equal to $p$.

Let $\mathscr{V}:=G(n, k)\times V$ be the trivial vector bundle of rank $n$ on $G(n, k)$ whose fiber
at every point is the vector space $V$. We write $S$ for the rank-$k$ subbundle of $\mathscr{V}$ whose
fiber at a point $\Lambda\in G(n, k)$ is the subspace $\Lambda$ itself; that is,
\[S_{[\Lambda]}=\Lambda\subset V=\mathscr{V}_{[\Lambda]}.\]

$S$ is called the \emph{universal subbundle} on $G(n, k)$; the quotient $Q=\mathscr{V}/S$  is called the \emph{universal quotient bundle},i.e.,
\begin{equation}\label{tau}
0\rightarrow S\rightarrow \kappa^{n}\rightarrow Q\rightarrow 0.
\end{equation}

\begin{proposition}(see \cite{E-H} Sec. 5.6.2 Grassmannians or \cite{G-H} Sec. 3.3)\label{cS}
\[c_pS=(-1)^p\sigma_{1^p}.\]
\end{proposition}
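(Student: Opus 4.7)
The plan is to reduce the claim to computing $c_p(S^*)$, using the standard identity $c_p(E^*)=(-1)^p c_p(E)$, and then to identify $c_p(S^*)$ with $\sigma_{1^p}$ via a degeneracy-locus description of Chern classes. The key observation is that $S^*$ is globally generated: dualizing the tautological inclusion $S\hookrightarrow\mathscr{V}$ produces a surjection $G(n,k)\times V^*\twoheadrightarrow S^*$, so each $\ell\in V^*$ induces a section $\tilde\ell$ of $S^*$ which vanishes at $[\Lambda]$ precisely when $\Lambda\subset\ker\ell$. In particular, every global section of $S^*$ arises in this way.

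Next I would invoke the standard fact that, for a globally generated vector bundle $E$ of rank $k$ and $k-p+1$ sufficiently general global sections $s_1,\dots,s_{k-p+1}$, the Chern class $c_p(E)$ is represented by the degeneracy locus $D=\{x:s_1(x),\dots,s_{k-p+1}(x)\text{ are linearly dependent}\}$. Applied to $E=S^*$ with generic linear functionals $\ell_1,\dots,\ell_{k-p+1}\in V^*$ whose common kernel $W=\bigcap_i\ker\ell_i$ has dimension $n-k+p-1$, the degeneracy locus equals $\{[\Lambda]\in G(n,k):\dim(\Lambda\cap W)\ge p\}$, because the $\tilde\ell_i$ fail to be independent in the fiber $S^*_{[\Lambda]}=\Lambda^*$ iff some nonzero vector of $\Lambda$ lies in every $\ker\ell_i$. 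Completing $W$ to a full flag $\mathcal{V}$ with $V_{n-k+p-1}=W$, I would check that this locus coincides with $\Sigma_{1^p}(\mathcal{V})$: the Schubert condition at index $i=p$ reads exactly $\dim(V_{n-k+p-1}\cap\Lambda)\ge p$; the conditions for $i<p$ follow automatically, since $V_{n-k+i-1}\subset V_{n-k+p-1}$ has codimension $p-i$, forcing $\dim(V_{n-k+i-1}\cap\Lambda)\ge p-(p-i)=i$; and the conditions for $i>p$ hold for every $\Lambda$ by the general subspace inequality $\dim(U\cap\Lambda)\ge\dim U+\dim\Lambda-n$. Therefore $c_p(S^*)=\sigma_{1^p}$, whence $c_p(S)=(-1)^p\sigma_{1^p}$.

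The main obstacle is justifying the degeneracy-locus formula itself, i.e., showing that for sufficiently generic $\ell_i$ the locus $D$ has pure codimension $p$ (matching the codimension of $\Sigma_{1^p}$ in $G(n,k)$) and is generically reduced, so that the scheme-theoretic degeneracy class coincides with the cohomological $c_p(S^*)$. This is the classical Kleiman-transversality argument applied to the surjective evaluation map from the trivial bundle of sections to $S^*$, and is standard once $S^*$ is known to be globally generated. Alternatively, one could sidestep this by computing $c_p(Q)=\sigma_p$ along parallel lines (using that $Q$ is also globally generated via the tautological quotient $\mathscr{V}\twoheadrightarrow Q$) and then inverting through the Whitney sum formula $c(S)c(Q)=1$ that comes from the tautological sequence (\ref{tau}); the parity $(-1)^p$ then emerges from the formal inversion of $c(Q)=1+\sigma_1+\sigma_2+\cdots$.
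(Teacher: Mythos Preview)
The paper does not actually prove this proposition; it is stated with citations to \cite{E-H} and \cite{G-H} and no argument is given. Your approach via the degeneracy-locus interpretation of the Chern classes of the globally generated bundle $S^*$ is essentially the classical argument one finds in those references, so there is nothing in the paper itself to compare against.

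There is, however, a slip in your justification of the degeneracy locus. You correctly assert that it equals $\{[\Lambda]:\dim(\Lambda\cap W)\ge p\}$, but the reason you offer---that the $\tilde\ell_i$ fail to be independent in $\Lambda^*$ iff \emph{some nonzero vector of $\Lambda$ lies in every $\ker\ell_i$}---is only correct for $p=1$. For $p\ge 2$ one has $\dim W=n-k+p-1$, so by the dimension formula the generic intersection $\Lambda\cap W$ already has dimension $p-1\ge 1$, and the condition ``$\Lambda\cap W\ne 0$'' holds for \emph{every} $\Lambda$. The correct computation is that the common zero locus in $\Lambda$ of the restrictions $\ell_1|_\Lambda,\dots,\ell_{k-p+1}|_\Lambda$ is exactly $\Lambda\cap W$, hence these $k-p+1$ restrictions span a subspace of $\Lambda^*$ of dimension $k-\dim(\Lambda\cap W)$; they are linearly dependent iff this dimension is at most $k-p$, i.e., iff $\dim(\Lambda\cap W)\ge p$. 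With this fix, the remainder of your argument (the identification with $\Sigma_{1^p}$ and the alternative route via $c(S)c(Q)=1$ from the tautological sequence) goes through.
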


Next we will talk about the intersection of Schubert classes.
One situation in which we can give a simple formula for the product of Schubert
classes is when one of the classes has the special form $\sigma_b$ with integer $b$. Such classes are
called \emph{special Schubert classes}.
\begin{proposition}(Pieri's Formula)
For any Schubert class $\sigma_{\overrightarrow{a}}$ and any integer $b$,
\[\sigma_b\sigma_{\overrightarrow{a}}=\sum_{|\overrightarrow{c}|=|\overrightarrow{a}|+b\atop a_i\le c_i\le a_{i-1},~ \forall i}\sigma_{\overrightarrow{c}}.\]
\end{proposition}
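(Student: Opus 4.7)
The plan is to exploit the duality between Schubert classes under the intersection pairing established just above. Expanding in the Schubert basis, one writes $\sigma_b \sigma_{\overrightarrow{a}} = \sum_{\overrightarrow{c}} n_{\overrightarrow{c}} \sigma_{\overrightarrow{c}}$ with the sum ranging over partitions satisfying $|\overrightarrow{c}| = |\overrightarrow{a}| + b$ inside the $k \times (n-k)$ rectangle, and the coefficient $n_{\overrightarrow{c}}$ is then the triple intersection number $\sigma_b \cdot \sigma_{\overrightarrow{a}} \cdot \sigma_{\overrightarrow{c}^\vee}$, where $\overrightarrow{c}^\vee = (n-k-c_k, \ldots, n-k-c_1)$ is the partition complementary to $\overrightarrow{c}$. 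Everything thus reduces to showing that this number is $1$ exactly when the interleaving condition $a_i \le c_i \le a_{i-1}$ holds for every $i$ (with the convention $a_0 = n-k$) and is $0$ otherwise.

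Next I would invoke Kleiman's transversality theorem, which is valid over any algebraically closed field, to choose three generic flags $\mathcal{V}, \mathcal{V}', \mathcal{V}''$ so that the cycles $\Sigma_b(\mathcal{V})$, $\Sigma_{\overrightarrow{a}}(\mathcal{V}')$, $\Sigma_{\overrightarrow{c}^\vee}(\mathcal{V}'')$ meet transversely and in the expected total codimension equal to $\dim G(n,k)$. Then $n_{\overrightarrow{c}}$ literally counts the points of the triple intersection, and the problem becomes one of pure linear algebra: for three generic flags, describe the $k$-planes $\Lambda \subset V$ that simultaneously satisfy the three systems of rank inequalities.

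The core computation is the following. Because the total codimension is maximal, for generic flags every Schubert rank inequality must become an equality at any point of the intersection. Using the conditions from the pair $\mathcal{V}', \mathcal{V}''$ alone (this is essentially the duality statement for $\sigma_{\overrightarrow{a}} \cdot \sigma_{\overrightarrow{c}^\vee}$), one produces a unique candidate $\Lambda$ spanned by vectors chosen from pairwise intersections of subspaces from the two flags. Imposing the additional single incidence from $\sigma_b$ forces exactly one of the $a$-indices to shift upward in a controlled way, and tracking this shift for each $i$ yields precisely the interleaving inequality $a_i \le c_i \le a_{i-1}$ that appears in the formula.

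The main obstacle is the final step: verifying that the intersection is a reduced single point whenever the interleaving holds, and genuinely empty otherwise. For the reduced-point count I would compute the Jacobian in local affine coordinates on $G(n, k)$ near the candidate $\Lambda$ and check that it is nondegenerate for generic choices; for the vanishing I would combine Kleiman transversality with the irreducibility of Schubert cycles to exclude extraneous components. Both ingredients work in arbitrary characteristic, so the resulting formula is uniform across all fields the paper considers.
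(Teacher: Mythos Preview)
The paper does not prove Pieri's Formula at all; it is stated as background, with the reader referred to \cite{G-H} and \cite{E-H} for the Schubert calculus. Your outline is essentially the argument given in \cite{E-H}, so in spirit it matches what the paper is citing.

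That said, there is a genuine gap in your write-up that matters for this paper's setting. You invoke Kleiman's transversality theorem and assert that it ``is valid over any algebraically closed field,'' and later conclude that ``both ingredients work in arbitrary characteristic.'' This is not correct in the form you need. In characteristic zero, Kleiman's theorem guarantees that a general $\mathrm{GL}_n$-translate of one Schubert variety meets another \emph{transversely}, so the intersection is reduced and you may literally count points. In positive characteristic, Kleiman's theorem only yields that a general translate meets in the \emph{expected dimension}; reducedness can fail, and your Jacobian check ``for generic choices'' does not go through without generic smoothness. Since the paper explicitly works over algebraically closed fields of arbitrary characteristic, your uniformity claim is unjustified as written. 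The standard fixes are either to compute the coefficients purely in the Chow ring (as in Fulton's \emph{Intersection Theory}, which is characteristic-free and is how \cite{E-H} ultimately handles the issue), or to argue via the affine cell decomposition of $G(n,k)$ and a degeneration/specialization argument that avoids transversality altogether.
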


Pieri's formula tells us how to intersect an arbitrary Schubert class with one of
the special Schubert classes $\sigma_b$ with integer $b$. Giambelli's formula is complementary, in
that it tells us how to express an arbitrary Schubert class in terms of special ones; the
two together give us a way of calculating the product of two arbitrary
Schubert classes.

\begin{proposition}(Giambelli's Formula)
\begin{equation}
\sigma_{a_1,a_2,\cdots, a_q}=\left|
                               \begin{array}{ccccc}
                                 \sigma_{a_1} & \sigma_{a_1+1} & \sigma_{a_1+2} & \cdots & \sigma_{a_1+q-1} \\
                                 \sigma_{a_2-1} & \sigma_{a_2} & \sigma_{a_2+1} & \cdots & \sigma_{a_2+q-2} \\
                                 \sigma_{a_3-2} & \sigma_{a_3-1} & \sigma_{a_3} & \cdots & \sigma_{a_3+q-3} \\
                                 \vdots & \vdots & \vdots & \ddots & \vdots \\
                                 \sigma_{a_q-q+1} & \sigma_{a_q-q+2} & \sigma_{a_q-q+3} & \cdots & \sigma_{a_q} \\
                               \end{array}
                             \right|
\end{equation}
\end{proposition}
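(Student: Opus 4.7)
The plan is to prove Giambelli's formula by induction on $q$, the number of parts of the partition, using only Pieri's formula as the tool for multiplying Schubert classes. Denote by $D_{a_1,\dots,a_q}$ the determinant on the right-hand side; the goal is to show $D_{a_1,\dots,a_q} = \sigma_{a_1,\dots,a_q}$.

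For the base case $q=1$, the formula reads $\sigma_{a_1}=\sigma_{a_1}$, which is trivial. For $q=2$ it asserts
$$\sigma_{a_1,a_2} \;=\; \sigma_{a_1}\sigma_{a_2} - \sigma_{a_1+1}\sigma_{a_2-1},$$
which I would verify by applying Pieri's formula to both products on the right: both expansions consist of Schubert classes $\sigma_{c_1,c_2}$ with $|\overrightarrow{c}|=a_1+a_2$ and with indices satisfying explicit inequalities, and the two sums differ exactly by the single class $\sigma_{a_1,a_2}$.

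For the inductive step, assume the formula holds for all partitions with at most $q-1$ parts. I would expand $D_{a_1,\dots,a_q}$ along the first column (or first row) as
$$D_{a_1,\dots,a_q} \;=\; \sum_{j=1}^{q}(-1)^{j+1}\,\sigma_{a_j-j+1}\cdot M_j,$$
where each minor $M_j$ is a $(q-1)\times(q-1)$ Giambelli-type determinant. A shift of indices shows that $M_j$ has the structure of the Giambelli determinant for a partition with $q-1$ parts, so by the inductive hypothesis each $M_j$ equals a single Schubert class. Each term in the sum is then a product of a special Schubert class $\sigma_{a_j-j+1}$ with an ordinary Schubert class, to which Pieri's formula applies and produces an explicit sum of Schubert classes $\sigma_{\overrightarrow{c}}$ with $|\overrightarrow{c}|=|\overrightarrow{a}|$.

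The main obstacle is the combinatorial bookkeeping in the final step: after applying Pieri to every term and collecting with the determinantal signs, one must show that every Schubert class other than $\sigma_{a_1,\dots,a_q}$ cancels and that $\sigma_{a_1,\dots,a_q}$ itself appears with coefficient $+1$. I would handle this by constructing a sign-reversing involution on the set of "excess" $(j,\overrightarrow{c})$ pairs — essentially pairing each excess pair with another one that contributes the opposite sign after a swap of two adjacent rows in the underlying determinant (equivalently, a swap of two steps in the associated non-intersecting lattice path configuration à la Lindström–Gessel–Viennot). The only unpaired contribution corresponds to the "diagonal" choice $c_i=a_i$, yielding $\sigma_{a_1,\dots,a_q}$ with coefficient $1$ and completing the induction.
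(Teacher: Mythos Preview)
The paper does not give its own proof of Giambelli's formula; it is simply quoted in Section~2 as background (alongside Pieri's formula) from the references \cite{G-H} and \cite{E-H}, and then used in Section~4. So there is no in-paper proof to compare against.

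Your outline is a correct and standard route. A couple of comments to sharpen it. First, your identification of the minors is right: deleting row $j$ and column $1$ yields exactly the Giambelli matrix for the sequence
\[
(b_1,\dots,b_{q-1}) \;=\; (a_1+1,\dots,a_{j-1}+1,\ a_{j+1},\dots,a_q),
\]
which is again a weakly decreasing sequence of nonnegative integers, so the inductive hypothesis applies and $M_j=\sigma_{b_1,\dots,b_{q-1}}$. Second, the cancellation step is the real content. The cleanest way to organize it is not term-by-term Pieri bookkeeping but the Lindstr\"om--Gessel--Viennot lemma (or, equivalently, the Jacobi--Trudi proof): interpret each $\sigma_m$ as a weighted sum over single lattice paths, so that the determinant counts signed $q$-tuples of paths, and the sign-reversing involution ``swap the first two paths that intersect'' kills all intersecting families; the surviving non-intersecting families biject with semistandard tableaux of shape $(a_1,\dots,a_q)$, whose generating function is $\sigma_{a_1,\dots,a_q}$ via iterated Pieri. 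Your sketch names exactly this involution, so the plan is sound; just be explicit that Pieri supplies the path interpretation and that non-intersecting families correspond to the single Schubert class.
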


\section{\textbf{A linear bound on the Chern ratios}}
Let $X$ be a nonsingular projective variety of dimension $n$ $(n\ge
2)$ over an algebraic closed field $\kappa$ with any characteristic.
Suppose $K_X$ or $-K_X$ is ample and $mK_X$ is very ample ($m$ can
be negative if $-K_X$ is ample, i.e. $X$ is Fano).

When $X$ is a complex surface and $L$ is an ample line bundle on
$X$, Reider (\cite{Rei}) showed that $K_X+3L$ is always generated by
global sections and $K_X+4L$ very ample. Around the same period,
Fujita (\cite{Fuj}) raised the following interesting conjecture.

\textbf{Fujita's Conjecture: } Let $X$ be a smooth n-dimensional
complex projective algebraic variety and let $L$ be an ample divisor
on X.
\begin{enumerate}
\item For $t\ge n+1$, $tL+K_X$ is base point free.
\item For $t\ge n+2$, $tL+K_X$ is very ample.
\end{enumerate}

For the very ampleness conjecture, one of the first results proved
in dimension $n\ge 3$ is the very ampleness of $2K_X + 12n^nL$,
using an analytic method based on the solution of a
Monge-Amp$\grave{\text{e}}$re equation by Demailly (see \cite{Dem1}). Other
related works are (\cite{Dem2}, \cite{E-L-N}, \cite{Siu1},
\cite{Siu2}, \cite{Siu3}, \cite{Yeu}) to improve the effective bound.

Since those proofs rely on the Kodaira Vanishing Theorem and its
generalizations, it seems to us that we don't have such results for
positive characteristic. However, Smith proved another version of
the Fujita conjecture in arbitrary characteristic if $L$ is ample
and generated by global sections via tight closure theory (see
\cite{Smi1}, \cite{Smi2}). Later, Keeler used the method of positive
characteristic to show another version of Fujita's Conjecture (see
\cite{Kee}).

\begin{theorem} (see \cite{Kee}, Theorem 1.1) \label{Keeler}
Let $X$ be a projective scheme of pure dimension $n$, smooth over a
field $\kappa$ of arbitrary characteristic. Let $L$ be an ample,
globally generated line bundle and let $H$ be an ample line bundle.
Then
\begin{enumerate}
\item $K_X+nL+H$ is base point free.
\item $K_X+(n+1)L+H$ is very ample.
\end{enumerate}
\end{theorem}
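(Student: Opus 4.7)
The plan is to derive both parts from Castelnuovo--Mumford regularity combined with a vanishing input that is valid in arbitrary characteristic. Recall that a coherent sheaf $F$ on $X$ is called \emph{$m$-regular} with respect to the ample and globally generated line bundle $L$ if $H^{i}(X, F \otimes L^{m-i}) = 0$ for every $i \ge 1$. A Koszul-complex argument using the sections of $L$ then yields the two standard consequences that will be exploited below: $F \otimes L^{m}$ is globally generated, and $F$ is automatically $m'$-regular for every $m' \ge m$.

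For part (1), I would apply the regularity criterion to $F = \mathcal{O}_X(K_X + H)$ with $m = n$: it then suffices to establish the vanishings
\[
H^{i}\bigl(X,\, \mathcal{O}_X(K_X + (n-i)L + H)\bigr) = 0 \quad \text{for } i = 1, \dots, n.
\]
In characteristic zero these follow immediately from Kodaira vanishing, because each $(n-i)L + H$ is ample. In arbitrary characteristic, the global generation of $L$ supplies a \emph{finite} morphism $\phi_L : X \to \mathbb{P}^{N}$ (finiteness follows from ampleness of $L$), and one invokes Fujita's vanishing theorem, valid in every characteristic: for any coherent sheaf $G$ and any ample line bundle $A$ there exists $m_0$ such that $H^i(X, G \otimes A^{\otimes m} \otimes N) = 0$ for all $m \ge m_0$, every nef line bundle $N$, and every $i > 0$. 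Pushing forward $\mathcal{O}_X(K_X + H)$ along $\phi_L$ and combining with Mumford regularity on $\mathbb{P}^{N}$ converts Fujita's statement into the required vanishings.

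For part (2), the extra copy of $L$ lets the same regularity machinery be applied to the ideal sheaf $I_Z$ of a length-two subscheme $Z \subset X$, which is either a pair of distinct closed points or a first-order thickening of a single point. Establishing $H^{1}(X, I_Z \otimes \mathcal{O}_X(K_X + (n+1)L + H)) = 0$ forces the restriction map on global sections to be surjective, hence separation of points and of tangent vectors, hence very ampleness. The main obstacle is the positive-characteristic case of the cohomology vanishings: with Kodaira vanishing unavailable, one must substitute Fujita's theorem and exploit the global generation of $L$ in an essential way via the finite map $\phi_L$. This is precisely why the hypothesis is that $L$ is globally generated and ample, rather than merely ample, and it is the step I expect to be technically the most delicate.
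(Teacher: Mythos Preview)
The paper does not prove this statement at all: Theorem~\ref{Keeler} is quoted verbatim from Keeler's paper \cite{Kee} (Theorem~1.1) and is used as a black box to guarantee, in positive characteristic, the existence of an $m$ depending only on $n$ such that $mK_X$ is very ample. No proof or sketch is supplied here, so there is nothing in the present paper to compare your proposal against.

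As for the proposal itself, the outline via Castelnuovo--Mumford regularity is the right framework, but the key vanishing step is not fully justified. You invoke Fujita's vanishing theorem, which only gives $H^i(X,G\otimes A^{\otimes m}\otimes N)=0$ for $m\ge m_0$ with $m_0$ depending on $G$; this does not directly yield the specific vanishings $H^i(X,\mathcal{O}_X(K_X+(n-i)L+H))=0$ with the precise exponent $n-i$. Pushing forward along the finite map $\phi_L$ and appealing to regularity on $\mathbb{P}^N$ is closer to Smith's tight-closure approach \cite{Smi1,Smi2} than to Keeler's, whose argument in \cite{Kee} rests on the notion of \emph{Frobenius amplitude}: one uses the Frobenius morphism to amplify positivity and obtain the needed vanishing in positive characteristic. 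Your sketch would need a genuine substitute for Kodaira vanishing at the indicated twists, and as written the Fujita-vanishing step does not close that gap.
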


We will use the results of Fujita's very ampleness conjecture to get
a bound which depends only on the dimension $n$ for the Chern ratios
and will use Van de Ven's idea first (cf. \cite{Hun} Introduction).

Assume $i: X\hookrightarrow \mathbb{P^N}$ is the canonical embedding
defined by the linear system $|mK_X|$ (i.e.
$mK_X=i^*\mathscr{O}_{\mathbb{P}^N}(1)$).  Let $\gamma$ be the Gauss
map:

\begin{equation} \label{Gauss}
\begin{split}
\gamma:~&~ X \longrightarrow G(\mathbb{P}^N, \mathbb{P}^n)=G(N+1, n+1)\\
        &~ x \longmapsto     T_{X,x},
\end{split}
\end{equation}
where $T_{X,x}$ is the tangent space to $X$ at $x$. There is an
usual bundle sequence on $G(N+1, n+1)$:
\[0\rightarrow S\rightarrow \kappa^{N+1}\rightarrow Q\rightarrow 0,\]
where $S$ is the universal bundle (see \cite{G-H} Chapter I or
\cite{E-H} Chapter 3), which pulls back to an exact sequence on $X$
\begin{equation}\label{pulltau}
0\rightarrow \gamma^*S\rightarrow \gamma^*\kappa^{N+1}\rightarrow \gamma^*Q\rightarrow 0.
\end{equation}

On the other hand we have the Euler exact sequence on
$\mathbb{P}^N$:
\[0\rightarrow \mathscr{O}_{\mathbb{P}^N}(-1)\rightarrow \mathscr{O}_{\mathbb{P}^{N}}^{N+1}\rightarrow T_{\mathbb{P}^N}(-1)\rightarrow 0,\]
which pulls back to an exact sequence on $X$
\begin{equation}\label{Euler}
0\rightarrow i^*\mathscr{O}_{\mathbb{P}^N}(-1)\rightarrow i^*\mathscr{O}_{\mathbb{P}^{N}}^{N+1}\rightarrow i^*T_{\mathbb{P}^N}(-1)\rightarrow 0.
\end{equation}
Moreover we have the twisted adjunction sequence on $X$:
\begin{equation}\label{ad}
0\rightarrow T_X(-1)\rightarrow i^*T_{\mathbb{P}^N}(-1)\rightarrow N_{X/\mathbb{P}^N}(-1)\rightarrow 0.
\end{equation}
These three sequences (\ref{pulltau}), (\ref{Euler}) and (\ref{ad}) fit together in a diagram
\begin{diagram}[small]
 &     &                                &      & 0                                     &            &0                   &&\\
 &     &                                &      &\dTo                                   &            &\dTo                  &&\\
 &     &                                &      &\gamma^*S                              &\rTo        &T_X(-1)                &&\\
 &     &                                &      &\dTo                                   &            &\dTo                     &&\\
0&\rTo &i^*\mathscr{O}_{\mathbb{P}^N}(-1)&\rTo  &i^*\mathscr{O}_{\mathbb{P}^N}^{N+1}     &\rTo      &i^*T_{\mathbb{P}^N}(-1) &\rTo  &0\\
 &     &                                &      &\dTo                                   &            &\dTo                     &&\\
  &     &                                &      &\gamma^*Q                             &       &N_{X/\mathbb{P}^N}(-1)               &&\\
   &     &                                &      &\dTo                                   &            &\dTo                     &&\\
    &     &                                &      & 0                                     &            &0                   &&.
\end{diagram}
By the snake lemma, we have the exact sequence
\[0\rightarrow i^*\mathscr{O}_{\mathbb{P}^N}(-1)\rightarrow \gamma^*S \rightarrow T_X(-1)\rightarrow 0,\]
i.e.
\[0\rightarrow \mathscr{O}_X(-mK_X)\rightarrow \gamma^*S \rightarrow T_X(-mK_X)\rightarrow 0.\]

So
\begin{equation}\label{cX}
c(\gamma^*S)=c(\mathscr{O}_X(-mK_X))c(T_X(-mK_X)).
\end{equation}

\begin{lemma}
Let $E$ be a vector bundle of rank $r$ and $L$ be a line bundle on a scheme $X$ over an algebraic closed field $k$. Then for all $p\ge 0$,
\begin{equation}
c_p(E\otimes L)=\sum_{i=0}^p{r-i\choose p-i}c_i(E)c_1(L)^{p-i}.
\end{equation}
\end{lemma}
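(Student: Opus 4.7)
The plan is to invoke the splitting principle, which holds in arbitrary characteristic as a formal consequence of the projective bundle formula. Namely, there exists a morphism $\pi\colon Y\to X$ (an iterated projectivization of subbundles) such that $\pi^*$ is injective on Chow groups and such that $\pi^*E$ admits a filtration whose successive quotients are line bundles $L_1,\ldots,L_r$; writing $\alpha_j:=c_1(L_j)$ for the Chern roots, one has $c(\pi^*E)=\prod_{j=1}^r(1+\alpha_j)$, so that $c_i(\pi^*E)=e_i(\alpha_1,\ldots,\alpha_r)$ is the $i$-th elementary symmetric polynomial. Since $\pi^*$ commutes with tensor product and pullback of line bundles, it suffices to verify the claimed identity in $A^\bullet(Y)$.

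Setting $\beta:=c_1(\pi^*L)$, the tensored bundle $\pi^*(E\otimes L)=\pi^*E\otimes \pi^*L$ has Chern roots $\alpha_j+\beta$, so
$$c_p(\pi^*(E\otimes L))=e_p(\alpha_1+\beta,\ldots,\alpha_r+\beta)=\sum_{\substack{S\subset\{1,\ldots,r\}\\ |S|=p}}\prod_{j\in S}(\alpha_j+\beta).$$
Next I would expand each factor via the binomial theorem: for fixed $S$ with $|S|=p$ and each subset $T\subset S$ with $|T|=i$, the expansion contributes one term of the form $\beta^{p-i}\prod_{j\in T}\alpha_j$. Reorganizing the double sum by first fixing $T$ and then counting $p$-subsets $S\supset T$, the number of such $S$ equals $\binom{r-i}{p-i}$. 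Therefore
$$c_p(\pi^*(E\otimes L))=\sum_{i=0}^p\binom{r-i}{p-i}e_i(\alpha_1,\ldots,\alpha_r)\,\beta^{p-i}=\sum_{i=0}^p\binom{r-i}{p-i}c_i(\pi^*E)\,c_1(\pi^*L)^{p-i},$$
and injectivity of $\pi^*$ descends this identity to $X$.

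There is essentially no serious obstacle: the only substantive step is the combinatorial identity counting supersets, which is routine. I would just note for completeness that the splitting principle is purely formal (no vanishing or characteristic hypotheses enter), so the argument is uniform in the characteristic of $\kappa$, matching the generality the paper requires.
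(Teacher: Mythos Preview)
Your argument via the splitting principle is correct and is the standard proof of this well-known formula (it appears, for instance, as Example~3.2.2 in Fulton's \emph{Intersection Theory}). The paper itself does not supply a proof: the lemma is stated without justification, presumably as a standard fact, and is then immediately applied to compute $c(T_X(-mK_X))$. So there is nothing to compare; your proof simply fills in what the paper takes for granted, and does so in the expected way.
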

From the above lemma, we have the following result.
\begin{lemma}\label{cp}
\begin{equation}
c(T_X(-mK_X)=\sum_{p=0}^{n}\sum_{i=0}^{p}{n-i\choose p-i} m^{p-i} c_ic_1^{p-i}.
\end{equation}
\end{lemma}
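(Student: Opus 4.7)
The plan is a direct application of the preceding lemma. That lemma computes the graded Chern classes of any tensor product $E\otimes L$ with $L$ a line bundle, and here we only need to specialize to $E=T_X$, which has rank $r=n$, and to $L=\mathscr{O}_X(-mK_X)$.

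First I would fix the sign convention. Throughout the paper $c_i$ stands for $c_i(T_X)$, and in particular $c_1=c_1(T_X)=-K_X$ as a class on $X$. Consequently
\[
c_1(L)=c_1(\mathscr{O}_X(-mK_X))=-mK_X=mc_1,
\]
so that $c_1(L)^{p-i}=m^{p-i}c_1^{p-i}$. This is the only place where anything at all could go wrong (a stray minus sign), so it is worth checking carefully at the outset before plugging anything in.

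Next I would substitute $r=n$, $c_i(E)=c_i$, and $c_1(L)=mc_1$ into the identity of the preceding lemma to obtain the graded piece
\[
c_p\bigl(T_X(-mK_X)\bigr)=\sum_{i=0}^{p}\binom{n-i}{p-i}m^{p-i}c_i c_1^{p-i}.
\]
Since $T_X$ has rank $n$, the bundle $T_X(-mK_X)$ also has rank $n$, so its total Chern class is obtained by summing $p$ from $0$ to $n$:
\[
c\bigl(T_X(-mK_X)\bigr)=\sum_{p=0}^{n}c_p\bigl(T_X(-mK_X)\bigr)=\sum_{p=0}^{n}\sum_{i=0}^{p}\binom{n-i}{p-i}m^{p-i}c_i c_1^{p-i},
\]
which is exactly the claimed identity.

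There is no substantive obstacle here: all of the algebraic work is carried by the preceding lemma, and the rest is a mechanical substitution. The one conceptual point to watch is the sign convention that turns $c_1(\mathscr{O}_X(-mK_X))$ into $+mc_1$ rather than $-mc_1$; once this is settled, the formula drops out by summing the graded components with no further geometric input, no induction, and no auxiliary hypotheses.
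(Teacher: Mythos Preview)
Your proof is correct and is essentially identical to the paper's approach: the paper simply states ``From the above lemma, we have the following result,'' treating the formula as an immediate specialization of the preceding lemma with $E=T_X$ and $L=\mathscr{O}_X(-mK_X)$. Your explicit verification of the sign convention $c_1(\mathscr{O}_X(-mK_X))=mc_1$ is a useful sanity check that the paper leaves implicit.
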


Next, we will prove our main theorem. The following two lemmas are needed in the proof.
\begin{lemma}\label{det}
Let $D_0=1$, and for any positive integer $n$ let
\begin{equation*}
D_n=\left|
                            \begin{array}{ccccc}
                                 a_{1} & a_{2} & a_{3} & \cdots & a_{n} \\
                                 1 & a_{1} & a_{2} & \cdots & a_{n-1} \\
                                 & 1 & a_{1} & \cdots & a_{n-2} \\
                                  &  & \ddots & \ddots & \vdots \\
                                  \multicolumn{2}{c}{\raisebox{1.3ex}[0pt]{\huge0}}
                                   &  & 1 & a_{1} \\
                               \end{array}
                             \right|,
\end{equation*}
then one has $$\sum_{i=0}^n(-1)^iD_ia_{n-i}=0,$$ here $a_0=1$.
\end{lemma}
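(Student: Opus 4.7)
The plan is to prove the lemma by cofactor expansion of $D_n$ along its first row. Since this row is $(a_1,a_2,\ldots,a_n)$, one obtains
\[ D_n = \sum_{j=1}^{n}(-1)^{j+1}a_j\det(M_{1j}), \]
where $M_{1j}$ denotes the $(n-1)\times(n-1)$ minor obtained by deleting row $1$ and column $j$. The main task is then to verify the identity $\det(M_{1j})=D_{n-j}$ for every $1\le j\le n$; once this is established, the lemma follows by a routine sign manipulation.

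The verification proceeds by displaying $M_{1j}$ in block upper-triangular form. In the original matrix defining $D_n$, row $r\ge 2$ has entry $0$ in columns $c<r-1$, entry $1$ in column $c=r-1$, and entry $a_{c-r+1}$ in columns $c\ge r$. Tracing these rules through after deletion of row $1$ and column $j$, the upper-left $(j-1)\times(j-1)$ block of $M_{1j}$ is upper triangular with $1$'s on the diagonal (and therefore has determinant $1$); the lower-left $(n-j)\times(j-1)$ block is identically zero; and the lower-right $(n-j)\times(n-j)$ block has precisely the Hessenberg pattern defining $D_{n-j}$. Block-triangular multiplicativity of the determinant then yields $\det(M_{1j})=D_{n-j}$.

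Substituting back produces the recursion $D_n=\sum_{j=1}^{n}(-1)^{j+1}a_j D_{n-j}$. Reindexing by $i=n-j$, and absorbing the boundary term $(-1)^n D_n a_0=(-1)^n D_n$ that arises from $i=n$ (recall $a_0=1$), this rearranges to the desired identity $\sum_{i=0}^{n}(-1)^i D_i a_{n-i}=0$. The only nontrivial step is the block-structure computation for $M_{1j}$, and even that is essentially mechanical once the index bookkeeping is set up carefully, so I do not anticipate any serious obstacle. Conceptually, the identity is equivalent to the generating-function relation $\bigl(\sum_{j\ge 0}a_j t^j\bigr)\bigl(\sum_{i\ge 0}(-1)^i D_i t^i\bigr)=1$, a Jacobi-Trudi-type inversion formula expressing $(-1)^iD_i$ as the Segre-class-type coefficients dual to the Chern-class-type coefficients $a_i$; this viewpoint serves as a quick sanity check on the signs and explains why a relation of this shape should be expected.
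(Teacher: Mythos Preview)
Your proposal is correct and follows essentially the same approach as the paper: both expand $D_n$ along the first row, identify each cofactor $\det(M_{1j})$ with $D_{n-j}$ via its block upper-triangular structure, and then rearrange the resulting recursion $D_n=\sum_{j=1}^n(-1)^{j+1}a_jD_{n-j}$ into the stated identity. Your write-up is in fact slightly more explicit than the paper's about why the minor equals $D_{n-j}$, and your generating-function remark anticipates exactly how the lemma is used later in the paper.
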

\begin{proof}
Using the expansion $D_n$ along the first row, one sees that
\begin{eqnarray*}
D_n&=&\sum_{i=1}^n(-1)^{1+i}a_i\left| \begin{array}{cccccccc}
                                 1 & a_{1} & \cdots & a_{i-2} & a_{i}&a_{i+1}& \cdots & a_{n-1} \\
                                 &\ddots & \ddots&\vdots&\vdots&\vdots & &\vdots\\
                                 & & 1 & a_1& a_3&a_4&\cdots &a_{n-i+2}\\
                                 & & & 1 & a_2&a_3& \cdots & a_{n-i+1}\\
                                 & & & & a_1 & a_2&\cdots & a_{n-i}\\
                                 & & & & 1 &a_1& \cdots & a_{n-i-1}\\
                                 & & & &   &\ddots&\ddots&\vdots\\
                                 \multicolumn{2}{c}{\raisebox{1.3ex}[0pt]{\Huge0}}
                                 & & &   &      & 1   & a_1\\
                               \end{array}\right|\\
 &=&\sum_{i=1}^n(-1)^{1+i}a_iD_{n-i}.
\end{eqnarray*}
Hence $$\sum_{i=0}^n(-1)^iD_ia_{n-i}=0.$$
\end{proof}

We say cycles $\delta_1$ and $\delta_2$ satisfying $\delta_1\le\delta_2$ if and only if $\delta_2-\delta_1$ is a nonnegative cycle.
\begin{lemma}\label{contr}
$\sigma_{1^t}\le \sigma_1^t,$ where $t$ is a positive integer.
\end{lemma}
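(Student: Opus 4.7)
The plan is to prove this by induction on $t$. The base case $t=1$ holds trivially since $\sigma_{1^1}=\sigma_1$. The key observation driving the induction step is that Pieri's formula $\sigma_b\cdot\sigma_{\overrightarrow{a}}=\sum\sigma_{\overrightarrow{c}}$ always expresses such a product as a non-negative integer combination of Schubert classes. Since every Schubert class $[\Sigma_{\overrightarrow{a}}]$ is represented by an honest effective cycle, any non-negative integer combination of Schubert classes is a non-negative cycle in the sense of the lemma; in particular, multiplication by $\sigma_1$ preserves the relation $\le$.

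For the induction step I would assume $\sigma_{1^{t-1}}\le\sigma_1^{t-1}$ and multiply both sides by $\sigma_1$ to obtain $\sigma_1\cdot\sigma_{1^{t-1}}\le\sigma_1^t$. It then remains to compute $\sigma_1\cdot\sigma_{1^{t-1}}$ directly from Pieri's formula. The admissible index vectors $\overrightarrow{c}$ with $|\overrightarrow{c}|=t$ and $1\le c_i\le 1$ for $i=2,\ldots,t-1$, together with $0\le c_t\le 1$, are precisely $(1,1,\ldots,1)$ with $t$ ones and $(2,1,\ldots,1)$ with $t-2$ trailing ones. Therefore
\begin{equation*}
\sigma_1\cdot\sigma_{1^{t-1}}=\sigma_{1^t}+\sigma_{2,1^{t-2}},
\end{equation*}
and since $\sigma_{2,1^{t-2}}$ is itself a non-negative cycle one concludes $\sigma_{1^t}\le\sigma_1\cdot\sigma_{1^{t-1}}\le\sigma_1^t$.

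I do not expect any serious obstacle here: the argument is essentially a double application of Pieri's formula, together with the elementary fact that multiplication by $\sigma_1$ sends non-negative combinations of Schubert classes to non-negative combinations. A minor bookkeeping point is the interpretation of the Pieri constraint $c_i\le a_{i-1}$ at $i=1$ (where $a_0$ has to be read as the maximal allowed value $n-k$ on $G(n,k)$), and the degenerate cases where $t$ exceeds the rank of $S$ on the ambient Grassmannian, for which $\sigma_{1^t}=0$ and the inequality becomes automatic from $\sigma_1^t\ge 0$.
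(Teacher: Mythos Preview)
Your proof is correct and follows exactly the same route as the paper: induction on $t$ with Pieri's formula supplying both inequalities in the chain $\sigma_{1^t}\le\sigma_{1^{t-1}}\sigma_1\le\sigma_1^t$. Your write-up is in fact more detailed than the paper's one-line version, which simply invokes Pieri and induction without spelling out the computation $\sigma_1\cdot\sigma_{1^{t-1}}=\sigma_{1^t}+\sigma_{2,1^{t-2}}$ or the monotonicity of multiplication by $\sigma_1$.
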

\begin{proof}
By Pieri's Formula and induction on $t$, we have
\[\sigma_{1^t}\le\sigma_{1^{t-1}}\sigma_{1}\le\sigma_1^t.\]
\end{proof}

The crucial idea for proving the main theorem is pulling back
effective Schubert classes of the Chow group of the Grassmannian
under the Gauss map after Schubert calculating in the Grassmannian.
So we need to guarantee the intersection theory of Schubert cycles
can be kept under the Guass map which is true by Zak's theorem.
\begin{theorem}\label{Zak}(\cite{Zak}, Corollary 2.8)
Let $X$ be a nonsingular projective variety of dimension $n$ and
$X\neq \mathbb{P}^n$ over an algebraically closed field $\kappa$. Then
the Gauss map is finite. If in addition the characteristic of
$\kappa$ is $0$, then the Gauss map is the normalization morphism.
\end{theorem}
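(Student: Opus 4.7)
The plan is to prove finiteness of $\gamma$ by contradiction, using Zak's theorem on tangencies as the main geometric input. Suppose $\gamma$ is not finite; since $X$ is proper, there must be some fiber $F = \gamma^{-1}([\Lambda])$ of positive dimension. The first observation is that for every $x \in F$, by the very definition of the Gauss map one has $x \in T_{X,x} = \Lambda$ as projective subspaces of $\mathbb{P}^N$, so $F \subset \Lambda$ and, moreover, $X$ is tangent to the fixed $n$-plane $\Lambda$ all along the positive-dimensional subvariety $F$.

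The next step is to invoke Zak's tangency theorem: for a smooth non-linear projective variety $X \subset \mathbb{P}^N$ tangent to a linear subspace $L$ along a closed subvariety $Y$, one has $\dim Y \le \dim L - \operatorname{codim}_{\mathbb{P}^N}(X)$. Taking $L = \Lambda$ with $\dim \Lambda = n$ and $Y = F$, this yields the bound $\dim F \le 2n - N$; combined with the fact that $\Lambda$ is cut out by $N-n$ hyperplanes each tangent to $X$ all along $F$, and a careful dimension count, one forces $X = \mathbb{P}^n$, contradicting the hypothesis. Hence $\gamma$ is quasi-finite and, being proper, finite. The key point is that this argument is purely dimension-theoretic and makes no use of separability, so it works in arbitrary characteristic.

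For the second assertion, assume $\operatorname{char} \kappa = 0$. Then the differential $d\gamma_x$ at a smooth point of $X$ is, up to natural identifications, the second fundamental form of $X \subset \mathbb{P}^N$, which is generically non-degenerate whenever $X$ is not contained in any linear subspace. Thus $\gamma$ is generically smooth; together with finiteness, it is generically \'etale of some degree $d$. Reflexivity (biduality of dual varieties in characteristic zero) then forces $d = 1$, so $\gamma$ is a finite birational morphism from the smooth, hence normal, variety $X$ onto its image, and therefore coincides with the normalization morphism of $\gamma(X)$. The main obstacle is the positive-characteristic half of the finiteness statement, since the differential argument above is unavailable when $\gamma$ can be purely inseparable; Zak's own treatment sidesteps this via a direct geometric analysis of the scheme-theoretic tangency locus, and reproducing this dimension count uniformly in characteristic is the delicate technical point.
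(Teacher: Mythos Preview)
The paper does not prove this theorem at all; it is quoted verbatim as Corollary~2.8 of Zak's monograph \cite{Zak} and used as a black box throughout Section~3. So there is no ``paper's own proof'' to compare against, and your sketch is an independent attempt to reconstruct Zak's argument.

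Your overall strategy for finiteness is the right one and is essentially how Zak proceeds: the fibre $\gamma^{-1}([\Lambda])$ is precisely the locus of points $x\in X$ with $T_{X,x}=\Lambda$, and Zak's theorem on tangencies bounds the dimension of such a tangency locus. However, you have misstated that theorem. The correct bound is
\[
\dim Y \;\le\; \dim L - \dim X,
\]
not $\dim Y \le \dim L - \operatorname{codim}_{\mathbb{P}^N}(X)$. With the correct inequality and $\dim L = n = \dim X$ one gets $\dim Y\le 0$ immediately, so every fibre is finite with no further ``careful dimension count'' needed. Your stated bound $\dim F\le 2n-N$ is both the wrong inequality and too weak to conclude anything when $N\le 2n$, and the patch you allude to (``$\Lambda$ is cut out by $N-n$ hyperplanes\ldots one forces $X=\mathbb{P}^n$'') is not an argument as written. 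You should also make explicit that Zak's tangency theorem requires $X$ to be nondegenerate; one handles this by replacing $\mathbb{P}^N$ with the linear span of $X$, the only exceptional case being $X=\mathbb{P}^n$, which is excluded by hypothesis.

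For the characteristic-zero assertion your outline is the standard one and is broadly correct, but the sentence ``the second fundamental form \ldots\ is generically non-degenerate whenever $X$ is not contained in any linear subspace'' is not accurate as stated (there are nondegenerate varieties with degenerate second fundamental form everywhere). What one actually uses is generic smoothness: in characteristic zero the Gauss map is separable, hence generically \'etale, and then reflexivity (or the classical fact that a general tangent hyperplane is tangent at a single point) forces it to be birational onto its image; since $X$ is smooth, hence normal, $\gamma$ is the normalization of $\gamma(X)$.
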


Let $n$ be a positive integer. Denote $p(n)$ to be the partition number of $n$, i.e. the number of the way to express $n$ as the summation of positive integers without considering the orders of them. We can define alphabet order of all vectors as follows. Given any two vectors $\overrightarrow{a}=(a_1,a_2,\cdots,a_r)$ and $\overrightarrow{b}=(b_1,b_2,\cdots,b_s)$ such that $a_1\ge a_2\ge\cdots\ge a_r>0$ and $b_1\ge b_2\ge\cdots\ge b_s>0$. Suppose $r=s$ otherwise we just put $0$'s after the short one such that $l(\overrightarrow{a})=l(\overrightarrow{b})$. If $a_1>b_1$ then we denote $\overrightarrow{a}>\overrightarrow{b}$. Otherwise if $a_1=b_1$, then we compare $a_2$ and $b_2$. Without loss of generality, suppose $a_2>b_2$ then we denote $\overrightarrow{a}>\overrightarrow{b}$.  Otherwise if $a_1=b_1$ and $a_2=b_2$, then we compare $a_3$ and $b_3$ and keep going. So we can compare the order of any two vectors $\overrightarrow{a}$ and  $\overrightarrow{b}$.

Suppose $a_1\ge a_2\ge\cdots\ge a_r>0$.
Denote $c_{\overrightarrow{a}}=c_{a_1,a_2,\cdots,a_r}:=c_{a_1}c_{a_2}\cdots c_{a_r}$, $c_{\overrightarrow{a}}S=c_{a_1,a_2,\cdots,a_r}S:=c_{a_1}S c_{a_2}S \cdots c_{a_r}S$ and $c_i^tS:=(c_iS)^t.$
\begin{theorem}\label{mainT}
Let $X$ be a nonsingular projective variety of dimension $n$ over an
algebraically closed field $\kappa$ with any characteristic. Suppose
$K_X$ (or $-K_X$) is ample. If the characteristic of $\kappa$ is $0$ or
the characteristic of $\kappa$ is positive and $\mathscr{O}_X(K_X)$ ($\mathscr{O}_X(-K_X)$ ,respectively)
is globally generated, then
\begin{equation}\label{poly}
(\frac{c_{2,1^{n-2}}}{c_{1^n}}, \frac{c_{2,2,1^{n-4}}}{c_{1^n}}, \cdots, \frac{c_{n}}{c_{1^n}})\in \mathbb{A}^{p(n)}
\end{equation}
is contained in a convex polyhedron in $\mathbb{A}^{p(n)}$ independent of $X$, where the elements in the parentheses are arranged from small to big in terms of the alphabet order of the lower indices of the numerators.
\end{theorem}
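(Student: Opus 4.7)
The plan is to fix an integer $m = m(n)$ depending only on $n$ such that $mK_X$ (respectively $-mK_X$) is very ample: in positive characteristic this follows from Theorem \ref{Keeler} applied with $L = H = K_X$, and in characteristic zero from the effective Demailly-type bounds noted earlier in the excerpt. I would then work with the embedding $i: X \hookrightarrow \mathbb{P}^N$ determined by $|mK_X|$ and the Gauss map $\gamma: X \to G(N+1, n+1)$. By Theorem \ref{Zak}, $\gamma$ is finite, so for every Schubert class $\sigma_{\vec{a}}$ the pullback $\gamma^*\sigma_{\vec{a}}$ is effective of the expected codimension; since $K_X$ is ample, every intersection number
\[
K_X^{\,n-|\vec{a}|}\cdot \gamma^*\sigma_{\vec{a}} \;\ge\; 0.
\]
These are the primitive non-negativities from which everything else will be derived.

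The next step is to translate these non-negativities into inequalities among the degree-$n$ Chern monomials of $X$. Starting from the snake-lemma sequence $0 \to \mathscr{O}_X(-mK_X) \to \gamma^*S \to T_X(-mK_X) \to 0$, the Whitney identity gives $c(\gamma^*S) = c(\mathscr{O}_X(-mK_X))\cdot c(T_X(-mK_X))$. Combining Proposition \ref{cS} (so that $c_p(\gamma^*S) = (-1)^p\gamma^*\sigma_{1^p}$) with Lemma \ref{cp} yields explicit polynomial identities expressing each $\gamma^*\sigma_{1^p}$ in terms of $c_1,\dots,c_p$ and $K_X$. Pieri and Giambelli let me write every $\gamma^*\sigma_{\vec{a}}$ as a polynomial in the $\gamma^*\sigma_{1^p}$'s, hence in Chern classes of $T_X$ and powers of $K_X$. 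In this way each primitive non-negativity above becomes a \emph{linear} relation in the degree-$n$ Chern monomials $c_{\vec{b}}$ of $X$, and dividing by $c_{1^n}=(-1)^nK_X^n$ produces linear inequalities in the ratios appearing in (\ref{poly}).

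The core of the argument is then an induction on partitions $\vec{a}\vdash n$ in the alphabet order of the theorem statement. For the base case I would use $K_X^{\,n-2}\cdot \gamma^*\sigma_{1,1}\ge 0$, which once expanded bounds $c_{2,1^{n-2}}/c_{1^n}$ by an absolute constant. For the inductive step, assuming each $c_{\vec{b}}/c_{1^n}$ is already controlled for $\vec{b}<\vec{a}$, I would select a Schubert class $\sigma_{\vec{c}}$ whose Giambelli expansion, combined with Lemma \ref{det} used to invert the triangular Chern-to-Schubert change of basis, isolates $c_{\vec{a}}$ with a nonzero coefficient and leaves only strictly smaller partitions as error terms; Lemma \ref{contr} then provides the domination $\sigma_{1^t}\le\sigma_1^t$ needed to keep all estimates linear. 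The Fano case ($-K_X$ ample) runs in parallel with $|{-mK_X}|$ in place of $|mK_X|$ and $-K_X$ in place of $K_X$ throughout; all the formulas and the alphabet-order induction carry over unchanged.

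The principal obstacle is the combinatorial bookkeeping of this inductive step: for each $\vec{a}$, exhibiting a Schubert class whose pullback non-negativity couples to $c_{\vec{a}}$ with a computable nonzero coefficient, verifying that Lemma \ref{det} inverts the resulting system in the triangular fashion required, and tracking the alternating signs introduced by $c_1=-K_X$. Once that bookkeeping is in place one obtains a finite explicit family of linear inequalities on the ratios in (\ref{poly}), which cuts out a convex polyhedron independent of $X$ as claimed.
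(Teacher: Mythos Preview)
Your proposal is correct and follows the paper's overall architecture: choose $m=m(n)$ making $mK_X$ very ample, use the Gauss map and Zak's finiteness to transport Schubert positivity to $X$, expand via the identity $c(\gamma^*S)=c(\mathscr{O}_X(-mK_X))\,c(T_X(-mK_X))$, and induct on partitions in alphabet order.

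The one place where you make life harder than the paper does is the inductive step. You propose, for each target partition $\vec a$, to pick a Schubert class $\sigma_{\vec c}$, run Giambelli, and then invoke Lemma~\ref{det} to invert the Chern--Schubert change of basis so as to isolate $c_{\vec a}$. The paper sidesteps this entirely. It uses, for each $\vec a=(a_1,\dots,a_r)\vdash n$, the \emph{product}
\[
(-1)^n\,c_{\vec a}(\gamma^*S)\;=\;\prod_{i}(-1)^{a_i}c_{a_i}(\gamma^*S)\;=\;\prod_i \sigma_{1^{a_i}}(\gamma^*S),
\]
which is automatically a nonnegative combination of Schubert classes (no Giambelli needed), and which by Lemma~\ref{contr} is dominated by $(-1)^n c_1(\gamma^*S)^n$. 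This yields the two-sided bound
\[
0\;\le\;(-1)^n c_{\vec a}(\gamma^*S)\;\le\;(-1)^n((n{+}1)m{+}1)^n c_1^n
\]
in one stroke. The triangularity you were going to extract from Lemma~\ref{det} is already manifest: formula~(\ref{cpgen}) shows $c_p(\gamma^*S)=c_p+(\text{terms in }c_i,\ i<p)$, so the product $\prod c_{a_i}(\gamma^*S)$ expands as $c_{\vec a}$ plus monomials $c_{\vec b}$ with $\vec b<\vec a$ in alphabet order, with leading coefficient exactly~$1$. Thus the induction closes without any inversion step. In the paper, Lemma~\ref{det} and Giambelli are used only in Section~4 to write down the explicit inequalities, not in the proof of Theorem~\ref{mainT} itself. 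A small related point: your base case as stated uses only $\gamma^*\sigma_{1,1}\ge 0$, which gives one side of the bound on $c_{2,1^{n-2}}/c_{1^n}$; you also need the upper comparison $\sigma_{1,1}\le\sigma_1^2$ (Lemma~\ref{contr}) to get the other side, exactly as the paper does.
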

\begin{proof}
Assume $X\neq \mathbb{P}^n$ without loss of generality, because finite objects will not affect the result.
Suppose $mK_X$ is very ample ($m$ can be negative if $-K_X$ is ample, i.e. $X$ is Fano), where $m$ only depends on $n$ by the known results of Fujita's very ampleness conjecture.
By Proposition \ref{cS} and Lemma \ref{contr}, we have
\begin{equation}
0\le (-1)^nc_{\overrightarrow{a}}S=(-1)^nc_{a_1,a_2,\cdots,a_r}S\le (-1)^nc_{1^n}S,
 \end{equation}
 for any $\overrightarrow{a}$ with $a_1\ge a_2\ge\cdots\ge a_r>0$ and $|\overrightarrow{a}|=\sum_{i=1}^r a_i=n$. Let $\gamma$ be the Gauss map defined in (\ref{Gauss}). From Zak's Theorem \ref{Zak}, $\gamma$ is finite. Then
\begin{equation}\label{lessthan}
0\le (-1)^nc_{\overrightarrow{a}}(\gamma^*S)\le (-1)^n(c_{1^n}(\gamma^*S)).
\end{equation}
We only need to show that each element in the $q$-th coordinate is bounded independent of $X$. We use induction on $q$. For $q=1$,
\begin{equation}
0\le (-1)^nc_{2,1^{n-2}}(\gamma^*S)\le (-1)^n(c_{1^n}(\gamma^*S)).
\end{equation}
Form (\ref{cX}) and Lemma \ref{cp}, we have
\begin{eqnarray}
c_1(\gamma^*S)&= & c_1(T_X(-mK_X))+mc_1\\\nonumber
&= & (nm+1)c_1+mc_1\\\nonumber
&= & ((n+1)m+1)c_1.
\end{eqnarray}

\begin{eqnarray}
c_2(\gamma^*S)&= & c_2(T_X(-mK_X))+mc_1c_1(T_X(-mK_X))\\\nonumber
&= & (\frac{1}{2}n(n-1)m^2+(n-1)m)c_1^2+c_2+mc_1(nm+1)c_1\\\nonumber
&= & (\frac{1}{2}n(n+1)m^2+nm)c_1^2+c_2.
\end{eqnarray}

So
\[\begin{split}
0\le(-1)^n((\frac{1}{2}n(n+1)m^2+nm)c_1^2+c_2)((n+1)m+1)^{n-2}c_1^{n-2}\\
\le (-1)^n((n+1)m+1)^nc_1^n,
\end{split}\]
and $\frac{c_{2,1^{n-2}}}{c_{1^n}}$ is bounded independent of $X$, i.e., $q=1$ is correct.

Now for any $\overrightarrow{a}$ with $a_1\ge a_2\ge\cdots\ge a_r>0$
and $\sum_{i=1}^r a_i=n$, by Lemma \ref{cp} and (\ref{cX}), one sees
\begin{equation}
c_p(T_X(-mK_x))=\sum_{i=0}^{p}{n-i\choose p-i} m^{p-i} c_ic_1^{p-i}
\end{equation}
and
\begin{equation}
c_p(\gamma^*S)=c_p(T_X(-mK_X))+c_{p-1}(T_X(-mK_X))mc_1.
\end{equation}
So
\begin{equation}\label{cpgen}
c_p(\gamma^*S)=\sum_{i=0}^{p}{n-i\choose p-i} m^{p-i} c_ic_1^{p-i}+\sum_{i=0}^{p-1}{n-i\choose p-1-i} m^{p-i} c_ic_1^{p-i}.
\end{equation}
From (\ref{lessthan}), it follows
\[0\le (-1)^nc_{\overrightarrow{a}}(\gamma^*S)\le (-1)^n((n+1)m+1)^nc_1^n.\]
We can see that the biggest lower index of Chern class in the left hand side of (\ref{cpgen}) is $\overrightarrow{a}$,
so we can control the value of
$\frac{c_{\overrightarrow{a}}}{c_{1^n}}$
in terms of $\frac{c_{\overrightarrow{b}}}{c_{1^n}}$, for every $\overrightarrow{b}<\overrightarrow{a}$. By induction, each $\frac{c_{\overrightarrow{b}}}{c_{1^n}}$ is independent of $X$, so we are done.
\end{proof}

By the proof of the above theorem, we can have the following result easily.

\begin{corollary}
Let $X$ be a nonsingular projective variety of dimension $n$ over an
algebraically closed field $\kappa$ with any characteristic. Suppose
$K_X$ (or $-K_X$) is ample. If the characteristic of $\kappa$ is $0$
or the characteristic of $\kappa$ is positive and
$\mathscr{O}_X(K_X)$ ($\mathscr{O}_X(-K_X)$, respectively) is globally generated,
then the Euler number $\chi_{top}(X)$ and Euler characteristic of the structure sheaf $\chi(X, \mathscr{O}_X)$ quotient by $K_X^n$ is
bounded, i.e. there exist constants $d_1$, $d_2$, $d_3$ and $d_4$, which do not depend on $X$ but only on $n$, such that
\begin{equation}\label{chitop}
d_1K_X^n\le\chi_{top}(X)\le d_2 K_X^n
\end{equation}
and
\begin{equation}\label{chi}
d_3K_X^n\le\chi(X, \mathscr{O}_X)\le d_4 K_X^n.
\end{equation}
\end{corollary}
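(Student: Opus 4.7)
The plan is to deduce both inequality chains directly from Theorem \ref{mainT} by writing each Euler characteristic as a $\mathbb{Q}$-linear combination of the top Chern numbers $c_{\overrightarrow{a}}$ with coefficients depending only on $n$. Once this is done, dividing by $c_{1^n}$ reduces everything to boundedness of the ratios $c_{\overrightarrow{a}}/c_{1^n}$, which is precisely what the main theorem provides.

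For the topological Euler characteristic I would use the standard identification $\chi_{top}(X) = \deg c_n(T_X) = c_n$, which holds over $\mathbb{C}$ by Gauss--Bonnet and over an algebraically closed field of arbitrary characteristic for the $\ell$-adic Euler characteristic via Grothendieck's formula for smooth projective varieties. Theorem \ref{mainT} bounds the single ratio $c_n/c_{1^n}$ by constants depending only on $n$. Since $c_1 = c_1(T_X) = -K_X$, one has $c_{1^n} = (-1)^n K_X^n$, so $|c_{1^n}| = |K_X^n|$; after absorbing signs coming from the parity of $n$ (and from whether $K_X$ or $-K_X$ is the ample class), this rearranges to $d_1 K_X^n \le \chi_{top}(X) \le d_2 K_X^n$ with $d_1, d_2$ depending only on $n$.

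For $\chi(X, \mathscr{O}_X)$ I would invoke Hirzebruch--Riemann--Roch, which in positive characteristic is available for smooth projective varieties as the special case of Grothendieck--Riemann--Roch applied to the structure morphism $X \to \mathrm{Spec}\,\kappa$. This writes
\[
\chi(X, \mathscr{O}_X) \;=\; \deg \mathrm{td}_n(T_X) \;=\; \sum_{|\overrightarrow{a}|=n} q_{\overrightarrow{a}}\, c_{\overrightarrow{a}},
\]
where $\mathrm{td}_n$ is the universal Todd polynomial of total degree $n$ in $c_1, \ldots, c_n$ and each $q_{\overrightarrow{a}} \in \mathbb{Q}$ depends only on $n$ and $\overrightarrow{a}$. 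Dividing both sides by $c_{1^n}$ and applying Theorem \ref{mainT} term by term yields constants $d_3, d_4$ as desired. Since the substantive work has already been done in Theorem \ref{mainT}, I do not anticipate a serious obstacle here; the only subtle step is the sign bookkeeping needed to convert bounds on $c_{\overrightarrow{a}}/c_{1^n}$ into bounds phrased directly in terms of $K_X^n$, which is routine.
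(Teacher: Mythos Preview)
Your proposal is correct and follows exactly the same route as the paper: identify $\chi_{top}(X)$ with $c_n$ and $\chi(X,\mathscr{O}_X)$ with the degree-$n$ Todd polynomial via Grothendieck--Hirzebruch--Riemann--Roch, then apply Theorem \ref{mainT} to bound each Chern ratio $c_{\overrightarrow{a}}/c_{1^n}$. The paper's proof is in fact even terser than yours, giving no more than these two observations.
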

\begin{proof}
Since $\chi_{top}(X)=c_n$, (\ref{chitop}) holds by Theorem \ref{mainT}.

For (\ref{chi}), it is direct result from Theorem \ref{mainT} and Grothendieck-Hirzebruch-Riemann-Roch theorem since the Todd classs of $X$ can be express as the linear combination of Chern classes.
\end{proof}

\section{\textbf{equalities of Chern classes}}
Let $X$ be a nonsingular projective $n$-fold $(n\ge 2)$ of Fano or of general type with ample canonical bundle $K_X$ over an algebraically closed field $\kappa$ of any characteristic. We will produce a new method to give a bunch of inequalities in terms of all the Chern classes $c_1, c_2, \cdots, c_n$ by pulling back Schubert classes in the Grassmannian under Gauss map. Hold the notations in Section 2 and 3.
%

\textbf{For nonsingular surface $(n=2)$:}

By (\ref{cX}) and  Lemma \ref{cp}, we have
\begin{eqnarray*}
c_1(\gamma^*S)&= & c_1(T_X(-mK_X))+mc_1\\
&= & (2m+1)c_1+mc_1\\
&= & (3m+1)c_1,
\end{eqnarray*}
and
\begin{eqnarray*}
c_2(\gamma^*S)&= & c_2(T_X(-mK_X))+mc_1c_1(T_X(-mK_X))\\
&= & (m^2+m)c_1^2+c_2+mc_1(2m+1)c_1\\
&= & (3m^2+2m)c_1^2+c_2.
\end{eqnarray*}

From Proposition \ref{cS}, it follows
\begin{equation*}
-c_1S=\sigma_1,
\end{equation*}
\begin{equation*}
c_2S=\sigma_{1,1}=\left|
                   \begin{array}{cc}
                     \sigma_1 & \sigma_2 \\
                     1 & \sigma_1 \\
                   \end{array}
                 \right|
                 =\sigma_1^2-\sigma_2,
\end{equation*}

So
\begin{equation*}
0\le c_2S=\sigma_{1,1}=\sigma_1^2-\sigma_2\le \sigma_1^2=(c_1S)^2.
\end{equation*}

Thank for Theorem \ref{Zak}, one sees
\begin{equation*}
0\le c_2(\gamma^*S)\le (c_1(\gamma^*S))^2,
\end{equation*}

i.e.
\begin{equation*}
0\le (3m^2+2m)c_1^2+c_2\le (3m+1)^2c_1^2,
\end{equation*}

i.e.
\begin{equation*}
-(3m^2+2m)c_1^2\le c_2\le (6m^2+4m+1)c_1^2.
\end{equation*}

\begin{remark}
From Bombieri's (\cite{Bom}) or Reider's (\cite{Rei}) result, we know that if characteristic of the field $\kappa$ is $0$ and $K_X$ is ample, then $5K_X$ is very ample.
If characteristic of the field $\kappa$ is positive, by Ekedahl's result (\cite{Eke} or cf. \cite{Ca-Fr}), then we also have $5K_X$ is very ample. So the result is not new. However, if $K_X$ is very ample, we have a uniform formula
\[-5c_1^2\le c_2\le 11c_1^2\]
independent of the characteristic of $\kappa$.
\end{remark}

\textbf{For nonsingular $3$-fold $(n=3)$:}

By (\ref{cX}) and  Lemma \ref{cp}, we have
\begin{eqnarray*}
c_1(\gamma^*S)&= & c_1(T_X(-mK_X))+mc_1\\
&= & (3m+1)c_1+mc_1\\
&= & (4m+1)c_1,
\end{eqnarray*}

\begin{eqnarray*}
c_2(\gamma^*S)&= & c_2(T_X(-mK_X))+mc_1c_1(T_X(-mK_X))\\
&= & (3m^2+2m)c_1^2+c_2+mc_1(3m+1)c_1\\
&= & (6m^2+3m)c_1^2+c_2
\end{eqnarray*}
and
\begin{eqnarray*}
c_3(\gamma^*S)&= & c_3(T_X(-mK_X))+mc_1c_2(T_X(-mK_X))\\
&= & (4m^3+3m^2)c_1^3+2mc_1c_2+c_3\\
&= & m^2(4m+3)c_1^3+2mc_1c_2+c_3.
\end{eqnarray*}

From Proposition \ref{cS}, one obtains
\begin{equation*}
-c_1S=\sigma_1,
\end{equation*}
\begin{equation*}
c_2S=\sigma_{1,1}=\left|
                   \begin{array}{cc}
                     \sigma_1 & \sigma_2 \\
                     1 & \sigma_1 \\
                   \end{array}
                 \right|
                 =\sigma_1^2-\sigma_2
\end{equation*}
and
\begin{equation*}
-c_3S=\sigma_{1,1,1}=\left|
                    \begin{array}{ccc}
                      \sigma_1& \sigma_2 & \sigma_3 \\
                      1 & \sigma_1 & \sigma_2 \\
                      0 & 1 & \sigma_1 \\
                    \end{array}
                  \right|
                  =\sigma_1^3+\sigma_3-2\sigma_1\sigma_2,
\end{equation*}

So
\begin{equation*}
-c_1Sc_2S=\sigma_1\sigma_{1,1}=\sigma_1^3-\sigma_1\sigma_2\le \sigma_1^3=-(c_1S)^3.
\end{equation*}
By Zak's theorem, we can get an inequality in terms of $c_1c_2$ and $c_1^3$:
\begin{equation*}
c_1(\gamma^*S)c_2(\gamma^*S)\ge (c_1(\gamma^*S))^3,
\end{equation*}
i.e.,
\begin{equation*}
(4m+1)c_1((6m^2+3m)c_1^2+c_2)\ge (4m+1)^3c_1^3,
\end{equation*}
i.e.
\begin{equation*}
(4m+1)c_1c_2\ge (4m+1)(10m^2+5m+1)c_1^3.
\end{equation*}

Moreover,
\begin{equation*}
-c_1Sc_2S=\sigma_1\sigma_{1,1}=\sigma_{2,1}+\sigma_{1,1,1}\ge \sigma_{1,1,1}=-c_3S\ge 0.
\end{equation*}
So
\begin{equation*}
c_1(\gamma^*S)c_2(\gamma^*S)\le c_3(\gamma^*S)\le 0,
\end{equation*}
i.e.,
\begin{equation*}
(4m+1)c_1((6m^2+3m)c_1^2+c_2)\le m^2(4m+3)c_1^3+2mc_1c_2+c_3\le 0,
\end{equation*}
i.e.
\begin{equation*}
m(20m^2+15m+3)c_1^3+(2m+1)c_1c_2\le c_3\le -(m^2(4m+3)c_1^3+2mc_1c_2).
\end{equation*}
\begin{remark}
The right hand side of the above inequality is deduced by Hunt in
Section 1 of \cite{Hun} by using Gauss-Bonnet Theorem I (see
\cite{G-H}, Chapter 3.3). But the left hand side of the above
inequality is new.
\end{remark}
\textbf{For nonsingular $4$-folds:}

By (\ref{cX}) and  Lemma \ref{cp}, we obtain
\begin{eqnarray*}
c(T_X(-mK_X))&=&1+(4m+1)c_1+((6m^2+3m)c_1^2+c_2)\\
&&+((4m^3+3m^2)c_1^3+2mc_1c_2+c_3)\\
&&+((m^4+m^3)c_1^4+m^2c_1^2c_2+mc_1c_3+c_4).
\end{eqnarray*}
So
\begin{eqnarray*}
c_1(\gamma^*S)&= & c_1(T_X(-mK_X))+mc_1\\
&= & (4m+1)c_1+mc_1\\
&= & (5m+1)c_1,
\end{eqnarray*}

\begin{eqnarray*}
c_2(\gamma^*S)&= & c_2(T_X(-mK_X))+mc_1c_1(T_X(-mK_X))\\
&= & (6m^2+3m)c_1^2+c_2+mc_1(4m+1)c_1\\
&= & (10m^2+4m)c_1^2+c_2,
\end{eqnarray*}

\begin{eqnarray*}
c_3(\gamma^*S)&= & c_3(T_X(-mK_X))+mc_1c_2(T_X(-mK_X))\\
&= & (4m^3+3m^2)c_1^3+2mc_1c_2+c_3\\
&&+mc_1((6m^2+3m)c_1^2+c_2)\\
&= & (10m^3+6m^2)c_1^3+3mc_1c_2+c_3,
\end{eqnarray*}
and
\begin{eqnarray*}
c_4(\gamma^*S)&= & c_4(T_X(-mK_X))+mc_1c_3(T_X(-mK_X))\\
&= & (m^4+m^3)c_1^4+m^2c_1^2c_2+mc_1c_3+c_4\\
&&+mc_1((4m^3+3m^2)c_1^3+2mc_1c_2+c_3)\\
&= & (5m^4+4m^3)c_1^4+3m^2c_1^2c_2+2mc_1c_3+c_4.
\end{eqnarray*}
From Proposition \ref{cS}, it follows
\begin{equation*}
-c_1S=\sigma_1,
\end{equation*}
\begin{equation*}
c_2S=\sigma_{1,1}=\left|
                   \begin{array}{cc}
                     \sigma_1 & \sigma_2 \\
                     1 & \sigma_1 \\
                   \end{array}
                 \right|
                 =\sigma_1^2-\sigma_2,
\end{equation*}
\begin{equation*}
-c_3S=\sigma_{1,1,1}=\left|
                    \begin{array}{ccc}
                      \sigma_1& \sigma_2 & \sigma_3 \\
                      1 & \sigma_1 & \sigma_2 \\
                      0 & 1 & \sigma_1 \\
                    \end{array}
                  \right|
                  =\sigma_1^3+\sigma_3-2\sigma_1\sigma_2,
\end{equation*}
and
\begin{equation*}
c_4S=\sigma_{1,1,1,1}=\left|
                     \begin{array}{cccc}
                       \sigma_1 & \sigma_2  & \sigma_3& \sigma_4 \\
                       1 & \sigma_1 & \sigma_2  & \sigma_3 \\
                       0 & 1 & \sigma_1 & \sigma_2  \\
                       0 & 0 & 1 & \sigma_1 \\
                     \end{array}
                   \right|
                  =\sigma_1^4-3\sigma_1^2\sigma_2+2\sigma_1\sigma_3+\sigma_2^2-\sigma_4.
\end{equation*}
Similarly, we can have several inequalities.

(1) We can get an inequality in terms of $c_1^2c_2$ and $c_1^4$:
\begin{equation*}
0\le (c_1S)^2c_2S=\sigma_1^2\sigma_{1,1}=\sigma_1^4-\sigma_1^2\sigma_2\le \sigma_1^4=(c_1S)^4,
\end{equation*}
so
\begin{equation*}
0\le (c_1(\gamma^*S))^2c_2(\gamma^*S)\le (c_1(\gamma^*S))^4,
\end{equation*}
i.e.,
\begin{equation*}
0\le (5m+1)^2c_1^2((10m^2+4m)c_1^2+c_2)\le (5m+1)^4c_1^4,
\end{equation*}
i.e.,
\begin{equation*}
-(5m+2)c_1^4\le c_1^2c_2\le (15m^2+6m+1)c_1^4.
\end{equation*}

(2) We can also get an inequality in terms of $c_1^2c_2$, $c_1c_3$ and $c_1^4$:
\begin{eqnarray*}
0\le c_1Sc_3S&=&\sigma_1\sigma_{1,1,1}\\
             &=&\sigma_1^4+\sigma_1\sigma_3-2\sigma_1^2\sigma_2\\
             &=&\sigma_1^4+\sigma_1\sigma_3-2\sigma_1(\sigma_3+\sigma_{2,1})\\
             &=&\sigma_1^4-\sigma_1\sigma_3-2\sigma_1\sigma_{2,1}\le \sigma_1^4=(c_1S)^4,
\end{eqnarray*}

so
\begin{equation*}
0\le c_1(\gamma^*S)c_3(\gamma^*S)\le (c_1(\gamma^*S))^4,
\end{equation*}
i.e.
\begin{equation*}
0\le (5m+1)c_1((10m^3+6m^2)c_1^3+3mc_1c_2+c_3)\le (5m+1)^4c_1^4,
\end{equation*}
i.e.
\begin{equation*}
-2m^2(5m+3)c_1^4-3mc_1^2c_2\le c_1c_3\le (115m^3+69m^2+15m+1)c_1^4-3mc_1^2c_2.
\end{equation*}
(3) We can also get an inequality in terms of $c_1^2c_2$ , $c_2^2$ and $c_1^4$:
\begin{eqnarray*}
0\le (c_2S)^2&=&\sigma_{1,1}^2\\
             &=&(\sigma_1^2-\sigma_2)^2\\
             &=&\sigma_1^4-2\sigma_1^2\sigma_2+\sigma_2^2\\
             &=&\sigma_1^4-2(\sigma_{1,1}+\sigma_2)\sigma_2+\sigma_2^2\\
             &=&\sigma_1^4-2\sigma_{1,1}-\sigma_2^2\le \sigma_1^4=(c_1S)^4,
\end{eqnarray*}

so
\begin{equation*}
0\le (c_2(\gamma^*S))^2\le (c_1(\gamma^*S))^4,
\end{equation*}
i.e.
\begin{equation*}
0\le ((10m^2+4m)c_1^2+c_2)^2\le (5m+1)^4c_1^4,
\end{equation*}
i.e.
\begin{eqnarray*}
&&-4m^2(5m+2)^2c_1^4-4m(5m+2)c_1^2c_2\le c_2^2\\
&\le& ((5m+1)^4-4m^2(5m+2)^2)c_1^4-4m(5m+2)c_1^2c_2.
\end{eqnarray*}
(4) We can also get an inequality in terms of $c_1^2c_2$ , $c_1c_3$, $c_4$ and $c_1^4$:
\begin{eqnarray*}
0\le c_4S    &=&\sigma_{1,1,1,1}\\
             &=&\sigma_1^4-3\sigma_1^2\sigma_2+2\sigma_1\sigma_3+\sigma_2^2-\sigma_4\\
             &=&\sigma_1^4-2(\sigma_1\sigma_3+\sigma_{2,1}\sigma_1)-(\sigma_2^2+\sigma_{1,1}\sigma_2)\\
             &&+2\sigma_1\sigma_3+\sigma_2^2-\sigma_4\\
             &=&\sigma_1^4-2\sigma_{2,1}\sigma_1-\sigma_{1,1}\sigma_2-\sigma_4\le \sigma_1^4=(c_1S)^4,
\end{eqnarray*}

so
\begin{equation*}
0\le c_4(\gamma^*S)\le (c_1(\gamma^*S))^4,
\end{equation*}
i.e.,
\begin{equation*}
0\le(5m^4+4m^3)c_1^4+3m^2c_1^2c_2+2mc_1c_3+c_4\le (5m+1)^4c_1^4,
\end{equation*}
i.e.,
\begin{equation*}
-(5m^4+4m^3)c_1^4\le m^2c_1^2c_2+2mc_1c_3+c_4\le ((5m+1)^4-(5m^4+4m^3))c_1^4.
\end{equation*}
(5) We can also get an inequality in terms of $c_1^2c_2$ , $c_2^2$ and $c_1^4$:
\begin{eqnarray*}
(c_1S)^2c_2S &=&\sigma_1^2\sigma_{1,1}\\
             &=&\sigma_1^4-\sigma_1^2\sigma_2\\
             &=&\sigma_1^4-2\sigma_1^2\sigma_2+(\sigma_2+\sigma_{1,1})\sigma_2\\
             &\ge&\sigma_1^4-2\sigma_1^2\sigma_2+\sigma_2^2\\
             &=&(\sigma_1^2-\sigma_2)^2=(c_2S)^2,
\end{eqnarray*}
so
\begin{equation*}
(c_2(\gamma^*S))^2\le (c_1(\gamma^*S))^2c_2(\gamma^*S),
\end{equation*}
i.e.
\begin{equation*}
((10m^2+4m)c_1^2+c_2)^2 \le (5m+1)^2c_1^2((10m^2+4m)c_1^2+c_2),
\end{equation*}

i.e.

\begin{equation*}
c_2^2 \le 2m(5m+2)(15m^2+6m+1)c_1^4+(5m^2+2m+1)c_1^2c_2.
\end{equation*}
(6) We can also get an inequality in terms of $c_1^2c_2$ , $c_2^2$, $c_1c_3$ and $c_1^4$:
\begin{eqnarray*}
(c_2S)^2-c_1Sc_3S&=&(\sigma_1^2-\sigma_2)^2-\sigma_1(\sigma_1^3+\sigma_3-2\sigma_1\sigma_2)\\
                 &=& \sigma_2^2-\sigma_1\sigma_3\\
                 &=&(\sigma_4+\sigma_{3,1}+\sigma_{2,2})-(\sigma_4+\sigma_{3,1})\\
                 &=&\sigma_{2,2}\ge 0,
\end{eqnarray*}
so
\begin{equation*}
c_1(\gamma^*S)c_3(\gamma^*S)\le (c_2(\gamma^*S))^2.
\end{equation*}
i.e.
\begin{equation*}
(5m+1)c_1((10m^3+6m^2)c_1^3+3mc_1c_2+c_3)\le ((10m^2+4m)c_1^2+c_2)^2,
\end{equation*}

i.e.

\begin{equation*}
c_1c_3 \le 10m^2(5m^2+4m+1)c_1^4+5m(4m+1)c_1^2c_2+c_2^2.
\end{equation*}
(7) We can also get an inequality in terms of $c_1^2c_2$ , $c_4$, $c_1c_3$ and $c_1^4$:
\begin{eqnarray*}
c_1Sc_3S-c_4S    &=&\sigma_1(\sigma_1^3+\sigma_3-2\sigma_1\sigma_2)\\
&&-(\sigma_1^4-3\sigma_1^2\sigma_2+2\sigma_1\sigma_3+\sigma_2^2-\sigma_4)\\
                 &=& \sigma_1^2\sigma_2-\sigma_1\sigma_3-\sigma_2^2+\sigma_4\\
                 &=&\sigma_1\sigma_{2,1}+\sigma_4-(\sigma_4+\sigma_{3,1}+\sigma_{2,2})\\
                 &=&\sigma_{2,1,1}\ge 0,
\end{eqnarray*}

so
\begin{equation*}
c_4(\gamma^*S)\le c_1(\gamma^*S)c_3(\gamma^*S),
\end{equation*}
i.e.,
\begin{equation*}
(5m^4+4m^3)c_1^4+3m^2c_1^2c_2+2mc_1c_3+c_4\le(5m+1)c_1((10m^3+6m^2)c_1^3+3mc_1c_2+c_3),
\end{equation*}

i.e.

\begin{equation*}
c_4\le 3m^2(15m^2+12m+2)c_1^4+3m(4m+1)c_1^2c_2+(3m+1)c_1c_3.
\end{equation*}

\textbf{For general dimension $n$:}

Let $c_0S=\sigma_0=1$, by Lemma \ref{det}, Giambelli's
formula and Proposition \ref{cS}, one obtains

$$c_nS\sigma_0+c_{n-1}S\sigma_1+c_{n-2}S\sigma_2+\cdots+c_1S\sigma_{n-1}+c_0S\sigma_n=0,$$ for any positive integer $n$.
This implies $$\sum_{i=0}^\infty c_iSx^i\sum_{i=0}^\infty \sigma_ix^i=1+\sum_{j=1}^\infty(\sum_{i=0}^jc_iS\sigma_{j-i})x^j=1,$$ where $x$ is an indeterminate. It follows that
\begin{eqnarray*}
1+\sum_{i=1}^\infty \sigma_ix^i &=& \frac{1}{1+\sum_{i=1}^\infty c_iSx^i}\\
&=&1+\sum_{k=1}^\infty(-1)^k\left(\sum_{i=1}^\infty c_iSx^i\right)^k\\
&=&1+\sum_{k=1}^\infty(-1)^k\left(\sum_{m=1}^\infty\sum_{\scriptstyle i_1+  \cdots+i_k=m \atop \scriptstyle i_1,\cdots,i_k\geq 1} c_{i_1,\cdots ,i_k}Sx^m \right)\\
&=&1+\sum_{m=1}^\infty \sum_{k=1}^m\left((-1)^k \sum_{\scriptstyle i_1+  \cdots+i_k=m \atop \scriptstyle i_1,\cdots,i_k\geq 1} c_{i_1,\cdots ,i_k}S\right)x^m
\end{eqnarray*}
Therefore, one sees
\begin{eqnarray}\label{sigmaToc}
\sigma_m &=& \sum_{k=1}^m\left((-1)^k \sum_{\scriptstyle
i_1+\cdots+i_k=m \atop \scriptstyle i_1,\cdots,i_k\geq 1}
c_{i_1,\cdots,i_k}S\right)\\\nonumber
&=& \sum_{\scriptstyle j_1+2j_2+\cdots+mj_m=m \atop \scriptstyle
j_1,\cdots,j_m\geq
0}(-1)^{j_1+\cdots+j_m}\frac{(j_1+\cdots+j_m)!}{j_1!\cdots
j_m!}c_{1^{j_1}}S\cdots c_{m^{j_m}}S.
\end{eqnarray}

In particular, we have
\begin{eqnarray}
-c_1S=\sigma_1\\
c_1^2S-c_2S=\sigma_2\\
-c_1^3S+2c_1Sc_2S-c_3S=\sigma_3\\
c_1^4S-3c_1^2Sc_2S+2c_1Sc_3S+c_2^2S-c_4S=\sigma_4
\end{eqnarray}
\begin{center}
$\cdots\cdots~~\cdots\cdots$
\end{center}

We can have a bunch of inequalities of Chern classes.

Step 1:  Express $n$ as the summation of positive integers without considering the orders of them, say $a_1\ge a_2\ge\cdots\ge a_r>0$, such that $n\ge \sum_{i=1}^r a_i$. Let $\overrightarrow{a}=(a_1,a_2,\cdots,a_r)$.

Step 2: By Giambelli's Formula, write $\sigma_{\overrightarrow{a}}$ in terms of $\sigma_1, \sigma_2, \cdots, \sigma_n$.

Step 3: By (\ref{sigmaToc}), express $\sigma_{\overrightarrow{a}}$ in terms of $c_1S,\cdots, c_nS$.

Step 4: Since $\sigma_{\overrightarrow{a}}>0$, by Zak's Theorem \ref{Zak}, pull back of the $\sigma_{\overrightarrow{a}}$ under Gauss map $\gamma$, we can express $\gamma^*(\sigma_{\overrightarrow{a}})$ in terms of $\gamma^*(c_1S),\cdots, \gamma^*(c_nS)$ which is great than or equal to $0$.

Stpe 5: Express $\gamma^*(c_1S),\cdots, \gamma^*(c_nS)$ in terms of $c_1, c_2, \cdots, c_n$ by (\ref{cX}) and Lemma \ref{cp} and get a inequality finally.

\begin{example}
Suppose $n=5$.

Step 1: Consider $\overrightarrow{a}=(3,2)$;

Step 2: Calculate \[\sigma_{3,2}=\left|
                       \begin{array}{cc}
                         \sigma_3 & \sigma_4 \\
                        \sigma_1 & \sigma_2 \\
                       \end{array}
                     \right|
                     =\sigma_3\sigma_2-\sigma_1\sigma_4;
\]

Step 3: We have \begin{eqnarray*}
0\le\sigma_{3,2}    &=& (-c_1^3S+2c_1Sc_2S-c_3S)(c_1^2S-c_2S)\\
&&+c_1S(c_1^4S-3c_1^2Sc_2S+2c_1Sc_3S+c_2^2S-c_4S)\\
               &=&c_1^2Sc_3S-c_1Sc_2^2S+c_3Sc_2S-c_1Sc_4S;
\end{eqnarray*}

Step 4: \[(\gamma^*(c_1S))^2\gamma^*(c_3S)-\gamma^*(c_1S)(\gamma^*(c_2S))^2+\gamma^*(c_3S)\gamma^*(c_2S)-\gamma^*(c_1S)\gamma^*(c_4S)\ge 0.\]

Step 5: By (\ref{cX}) and  Lemma \ref{cp},
\begin{eqnarray*}
c_1(\gamma^*S)&= & c_1(T_X(-mK_X))+mc_1\\
&= & (5m+1)c_1+mc_1\\
&= & (6m+1)c_1,
\end{eqnarray*}

\begin{eqnarray*}
c_2(\gamma^*S)&= & c_2(T_X(-mK_X))+mc_1c_1(T_X(-mK_X))\\
&= & (10m^2+4m)c_1^2+c_2+mc_1(5m+1)c_1\\
&= & (15m^2+5m)c_1^2+c_2,
\end{eqnarray*}

\begin{eqnarray*}
c_3(\gamma^*S)&= & c_3(T_X(-mK_X))+mc_1c_2(T_X(-mK_X))\\
&= & (10m^3+6m^2)c_1^3+3mc_1c_2+c_3\\
&&+mc_1((10m^2+4m)c_1^2+c_2)\\
&= & (20m^3+10m^2)c_1^3+4mc_1c_2+c_3,
\end{eqnarray*}
and
\begin{eqnarray*}
c_4(\gamma^*S)&= & c_4(T_X(-mK_X))+mc_1c_3(T_X(-mK_X))\\
&= & (5m^4+4m^3)c_1^4+3m^2c_1^2c_2+2mc_1c_3+c_4\\
&&+mc_1((10m^3+6m^2)c_1^3+3mc_1c_2+c_3)\\
&= & (15m^4+10m^3)c_1^4+6m^2c_1^2c_2+3mc_1c_3+c_4.
\end{eqnarray*}
Finally, we get
\[\begin{split}
-(420m^5&+350m^4+120m^3+15m^2)c_1^5+(8m^3-18m^2-6m)c_1^3c_2\\
&+(33m^2+14m+1)c_1^2c_3-(2m+1)c_1c_2^2\\
&-(6m+1)c_1c_4+c_2c_3\ge 0.
\end{split}\]
\end{example}

If $K_X$ is very ample, i.e. $m=1$ , them we have \[-905c_1^5-16c_1^3c_2+48c_1^2c_3-3c_1c_2^2-7c_1c_4+c_2c_3\ge 0\]
\begin{remark}
For fixed dimension $n$, the number of the inequalities of Chern classes is $\sum_{i=2}^np(n)$, where $p(n)$ is the partition number of $n$. Partitions can be graphically visualized with Young diagrams or Ferrers diagrams. They occur in a number of branches of mathematics and physics, including the study of symmetric polynomials, the symmetric group and in group representation theory in general. It is known that (cf. \cite{And}) an asymptotic expression for $p(n)$ is given by
\[ {\displaystyle p(n)\sim {\frac {1}{4n{\sqrt {3}}}}\exp \left({\pi {\sqrt {\frac {2n}{3}}}}\right)} ~as~ {\displaystyle n\rightarrow \infty }.\]
\end{remark}

\section*{Acknowledgements}
The first author would like to thank  N. Mok for providing excellent research environment in the University of Hong Kong while part of
this research was done there. Both authors would like to thank for the reviewers for pointing out some typos and useful suggestions in the original version.

\end{document}